\numberwithin{equation}{section}
\begin{document} 
\newcommand{\s}{\vspace{0.2cm}} 

\newtheorem{theo}{Theorem} 
\newtheorem{prop}{Proposition}
\newtheorem{coro}{Corollary}
\newtheorem{lemm}{Lemma}
\newtheorem{example}{Example}
\theoremstyle{remark}
\newtheorem{rema}{\bf Remark} 
\newtheorem{defi}{\bf Definition}

\newcommand{\rac}{{\mathbb{Q}}}
\newcommand{\comp}{{\mathbb{C}}}
\newcommand{\hip}{{\mathbb{H}}}
\newcommand{\hola}{{\overline{\rac}}}

\title[Weil's Galois Descent Theorem]{Weil's Galois Descent Theorem from a computational point of view}
\author{Rub\'en A. Hidalgo and Sebasti\'an Reyes-Carocca}
\address{Departamento de Matem\'aticas y Estad\'istica, Universidad de La Frontera, Avenida Francisco Salazar 01145, Temuco, Chile.} 
\email{ruben.hidalgo@ufrontera.cl, sebastian.reyes@ufrontera.cl} 
\thanks{The first author was partially supported by Fondecyt  Grant 1150003. The second author was partially supported by Fondecyt Grant 11180024, 1190991 and Redes Grant 2017-170071}
\keywords{Algebraic varieties, Galois extensions, Fields of definition}
\subjclass[2010]{14E99, 14A10, 12F10}

\begin{abstract}
Let ${\mathcal L}/{\mathcal K}$ be a finite Galois extension and let $X$ be an affine algebraic variety defined over ${\mathcal L}$. Weil's Galois descent theorem provides  necessary and sufficient conditions for  $X$ to be definable over ${\mathcal K}$, that is, for the existence of an algebraic variety $Y$ defined over ${\mathcal K}$ together with a birational isomorphism $R:X \to Y$ defined over ${\mathcal L}$. Weil's proof does not provide a method to construct the birational isomorphism $R.$ The aim of this paper is to give an explicit construction of $R$.
\end{abstract}
\maketitle

\section{Introduction}
Let ${\mathcal K}$ be a perfect field and let ${\mathcal C}$ be an algebraic closure of it.  An affine  algebraic  variety $X \subset {\mathcal C}^{n}$ is said to be {\it defined} over a subfield ${\mathcal R}$ of ${\mathcal C}$  if  its corresponding ideal of polynomials $\mathbb{I}(X)$ can be generated by a  finite collection of polynomials with coefficients in ${\mathcal R}$. Let us assume that  $X$ is defined over a subfield ${\mathcal L}$ of  ${\mathcal C}$ which is a finite Galois extension of ${\mathcal K}$. Under this assumption, we will say that $X$ is {\it definable} over ${\mathcal K}$, with respect to the Galois extension ${\mathcal L}/{\mathcal K}$,  if there is an algebraic variety $Y$ defined over ${\mathcal K}$ and a birational isomorphism $R:X \to Y$ defined over ${\mathcal L}$.

To decide whether or not $X$ is definable over ${\mathcal K}$  is, in general, a difficult task. For instance, if ${\mathcal L}$ is the field of complex numbers and ${\mathcal K}$ is the field of real numbers, then there are known explicit examples of complex algebraic curves which are not definable over the reals. These examples were provided by Shimura \cite{Shimura} and Earle \cite{Earle,Earle2} and later by Huggins \cite{Huggins} (in the hyperelliptic case) and by the first author \cite{Hidalgo} and Kontogeorgis \cite{Ko} (in the non-hyperelliptic situation).

The natural action of $\mbox{Gal}({\mathcal L}/{\mathcal K})$ on  the ring of polynomials with coefficients in $\mathcal{L}$ induces a well-defined action $(\sigma, X)\to X^{\sigma}$ on the set of birational isomorphism classes of algebraic varieties. A collection of birational isomorphisms  
$$\{f_{\sigma}:X \to X^{\sigma}: \sigma \in \mbox{Gal}({\mathcal L}/{\mathcal K})\}$$ defined over ${\mathcal L}$ satisfying the so-called {\it Weil's co-cycle condition} $$f_{\tau\sigma}=f_{\sigma}^{\tau} \circ f_{\tau} \,\, \mbox{ for each } \,\, \sigma,\tau \in {\rm Gal}({\mathcal L}/{\mathcal K})$$ is called a {\it Galois descent datum} for $X$ with respect to ${\mathcal L}/{\mathcal K}$. 
 
 Assume that $X$ is definable over $\mathcal{K}.$ Namely, suppose the existence of a birational isomorphism $R:X \to Y$ defined over ${\mathcal L}$ where $Y$ is an affine algebraic variety  defined over ${\mathcal K}$. Then $Y=Y^{\sigma}$ for each $\sigma \in {\rm Gal}({\mathcal L}/{\mathcal K}),$ and the collection $$\{(R^{\sigma})^{-1} \circ R:X \to X^{\sigma}: \sigma \in {\rm Gal}({\mathcal L}/{\mathcal K})\}$$ is a Galois descent datum for $X$ with respect to ${\mathcal L}/{\mathcal K}.$ In other words, the existence of such a Galois descent datum for $X$ is a necessary condition for $X$ to be definable over ${\mathcal K}$.
Conversely, Weil in \cite{Weil} proved that the existence of such a Galois descent datum is also sufficient condition. More precisely:

\s

{\bf Weil's Galois  descent theorem.} 
Let ${\mathcal K}$ be a perfect field, let ${\mathcal C}$ be an algebraic closure of ${\mathcal K}$ and let ${\mathcal L}$ be a subfield of ${\mathcal C}$ that is a finite Galois extension of ${\mathcal K}.$ Set $\Gamma={\rm Gal}({\mathcal L}/{\mathcal K})$ and assume that $X$ is an affine algebraic variety defined over ${\mathcal L}$.

\begin{enumerate}
\item[(a)] If $X$ admits a Galois descent datum $\{f_{\sigma}\}_{\sigma \in \Gamma}$ with respect to  ${\mathcal L}/{\mathcal K}$, then there exists an algebraic variety $Y$, defined over ${\mathcal K}$, and there exists a birational isomorphism $R:X \to Y$, defined over ${\mathcal L}$, such that $R=R^{\sigma} \circ f_{\sigma}$ for every $\sigma \in \Gamma$. Moreover, if all the  isomorphisms $f_{\sigma}$ are biregular then $R$ can be chosen to be biregular. 

\item[(b)] If there is another birational isomorphism  $\hat{R}:X \to \hat{Y}$, defined over ${\mathcal L}$, where $\hat{Y}$ is defined over ${\mathcal K}$, such that $\hat{R}=\hat{R}^{\sigma} \circ f_{\sigma}$ for every $\sigma \in \Gamma$, then there exists a birational isomorphism $J:Y \to \hat{Y}$, defined over ${\mathcal K}$, such that $\hat{R}=J \circ R$.
\end{enumerate}

\s

Weil's proof does not provide an algorithm to construct the isomorphism $R$ explicitly. However, in the proof of \cite[Proposition 1]{Weil}, it was observed that if $\{f_{\sigma}\}$ is a Galois descent datum, if each  $f_{\sigma}$ is biregular and if an explicitly  birational map $R$ as before is known, then there is an explicit method to obtain a new biregular isomorphism $X \to Z$, defined over ${\mathcal L}$, with $Z$ still defined over ${\mathcal K}$.  Such an explicit method is given by considering the map 
$$F:X \to Y^{n} \,\, \mbox{ defined by } \,\, x \mapsto (R(x),R^{\sigma_{2}}(x),\ldots,R^{\sigma_{n}}(x)),$$
where 
$\Gamma=\{\sigma_{1}=e,\sigma_{2},\ldots,\sigma_{n}\}$ and then, as there is a natural permutation action of $\Gamma,$ to consider the classical invariant theory to construct a regular map $\Psi:Y^{n} \to Z$, defined over ${\mathcal L}$, so that $Z$ is defined over ${\mathcal K}$ and $\Psi \circ F:X \to Z$ is a biregular isomorphism.  

\s

In this article, we follow similar ideas as above to construct explicitly a rational map $R:X \to {\mathcal C}^{m}$, defined over ${\mathcal L}$, such that
$Y=R(X)$ is defined over ${\mathcal K}$ and $R:X \to Y$ is a birational isomorphism. This explicit construction is done in terms of  equations for $X$ and of a Galois descent datum for $X$ with respect to ${\mathcal L}/{\mathcal K}$. Since $R$ is explicitly given, the algorithm can be used to compute explicit equations for $Y$ over ${\mathcal K}$. Indeed, for the sake of completeness,  in the last section we will work out an  example where $X$ is a complex algebraic curve  of genus five defined over $\mathbb{Q}(i)$. This curve admits  a  group of conformal automorphisms isomorphic to ${\mathbb Z}_{2}^{4}$  and it is also endowed with an anticonformal involution; so, it is definable over $\mathbb{Q}$.

\section{Preliminaries}

Let ${\mathcal K}$ be a perfect field, let ${\mathcal C}$ be an algebraic closure of it, and let ${\mathcal L}$ be a subfield of ${\mathcal C}$ which is a finite Galois extension of ${\mathcal K}.$ We denote by $$\hat{\Gamma}={\rm Gal}({\mathcal C}/{\mathcal K}) \, \, \mbox{ and } \, \, \Gamma={\rm Gal}({\mathcal L}/{\mathcal K})$$the Galois group associated to the  extensions ${\mathcal C}/{\mathcal K}$ and ${\mathcal L}/{\mathcal K}$ respectively.

Each $\eta \in\hat{\Gamma}$  induces a natural bijection
$$\hat{\eta}:{{\mathcal C}}^{n} \to {{\mathcal C}}^{n} \,\, \mbox{ given by } (y_{1},\ldots, y_{n}) \mapsto (\eta(y_{1}), \ldots, \eta(y_{n})),$$and if $P \in {\mathcal C}[z_{1},\dots,z_{n}]$ then we denote by $P^{\eta}$ the polynomial obtained after applying $\eta$ to the coefficients of $P$. In other words, the following diagram commutes.
%
%In other words $P^{\eta}=\widehat{\eta} \circ P \circ \widehat{\eta}^{-1}$, where $\widehat{\eta}$ at the right acts on ${\mathcal C}^{n}$ and the one at the left acts on ${\mathcal C},$ as shown in the diagram below.
\begin{equation*}
\begin{tikzpicture}[node distance=3.1 cm, auto]
  \node (P) {$\mathcal{C}^n$};
  \node (Q) [right of=P] {$\mathcal{C}$};
  \node (A) [below of=P, node distance=1.2 cm] {$\mathcal{C}^n$};
  \node (C) [below of=Q, node distance=1.2 cm] {$\mathcal{C}$};
  \draw[->] (P) to node {$P$} (Q);
  \draw[->] (A) to node {$P^{\eta}$} (C);
  \draw[->] (P) to node [swap] {$\hat{\eta}$} (A);
    \draw[->] (Q) to node {${\eta}$} (C);
\end{tikzpicture}
\end{equation*}

Let $$X = \{ (y_{1},\ldots, y_{n})  \in  {\mathcal C}^{n} : P_j(y_{1},\ldots, y_{n})=0, \, 1 \le j \le r \}$$be an affine algebraic variety where each $P_j \in {\mathcal L}[z_1, \ldots, z_n].$ If $\eta \in \hat{\Gamma}$ then $P_j^{\eta} \in {\mathcal L}[z_{1},\ldots,z_{n}]$ and 
$$\hat{\eta}(X) = \{ (y_{1},\ldots, y_{n})  \in  {\mathcal C}^{n} : P_j^{\eta}(y_{1},\ldots, y_{n})=0, \, 1 \le j \le r \}.$$

Let us denote by $\rho:\hat{\Gamma} \to \Gamma$  the canonical epimorphism defined by restriction. Note that:
\begin{enumerate} 
\item if $\rho(\eta)=\sigma$ then $P_j^{\eta}=P_j^{\sigma},$ and \item if  $\rho(\eta_{1})=\rho(\eta_{2})$ then $\hat{\eta_{1}}(X)=\hat{\eta_{2}}(X)$. 
\end{enumerate} 

Then, if $\rho(\eta)=\sigma$ then we denote $\hat{\eta}(X)$ by  $X^{\sigma}$. 

%In general it may happen that $X^{\sigma}$ and $X$ are not birationally equivalent.

\s

Let $\{e_{1},  \ldots, e_{m}\}$ be a basis of ${\mathcal L}$ as a ${\mathcal K}-$vector space. Then the matrix 
$$A=\left[ \begin{array}{cccc}
e_{1} & e_{2} & \cdots & e_{m}\\
\sigma_{2}(e_{1}) & \sigma_{2}(e_{2}) & \cdots & \sigma_{2}(e_{m})\\
\vdots & \vdots & \ddots & \vdots \\
 \sigma_{m}(e_{1}) & \sigma_{m}(e_{2}) & \cdots & \sigma_{m}(e_{m})
\end{array}
\right] \in \mbox{M}(m \times m, \mathcal{L})
$$is non-singular (see, for example, \cite{Hungerford}).  %
%The following fact about the linearly independency of automorphisms  of Galois extensions can be found in Hungenford's book \cite{Hungerford}.
%
%
%\begin{lemm}[\cite{Hungerford}]
%The matrix $A$ has non-zero determinant.
%\end{lemm}
%
The trace map 
$${\rm Tr}:{\mathcal L} \to {\mathcal K} \,\, \mbox{ given by } \,\, a \mapsto \Sigma_{j=1}^{m} \sigma_{j}(a)$$ extends naturally to polynomial rings
$${\rm Tr}:{\mathcal L}[x_{1}, \ldots ,x_{n}] \to {\mathcal K}[x_{1}, \ldots ,x_{n}] \,\, \mbox{ given by } \,\, P \mapsto \Sigma_{j=1}^{m} P^{\sigma_{j}}.$$ 
%
%A direct consequence of Lemma \ref{invertibilidad} is the following fact about invariant ideals.
%
%
\begin{lemm}\label{polinv}
Under the above notations, we have the following.
\begin{enumerate}
\item[(a)] If $P \in {\mathcal L}[x_{1},\ldots,x_{n}]$ then $P \in {\rm Span}_{{\mathcal L}}({\rm Tr}(e_{1}P),\ldots,{\rm Tr}(e_{m}P))$.

\item[(b)] If $I<{\mathcal L}[x_{1},\ldots,x_{n}]$ is an ideal so that $P^{\sigma} \in I$ for every 
$\sigma \in \Gamma$ and every $P \in I,$ then $I$ can be generated as ideal by polynomials in $I \cap {\mathcal K}[x_{1},\ldots,x_{n}]$.
\end{enumerate}
\end{lemm}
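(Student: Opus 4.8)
The plan is to reduce everything to Lemma \ref{invertibilidad} (invertibility of the matrix $A$) by a linear-algebra argument over $\mathcal L$ applied coefficient-by-coefficient.

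First I would prove part (1). Fix $P \in \mathcal L[x_1,\dots,x_n]$ and consider the $m$ polynomials $\mathrm{Tr}(e_i P) = \sum_{j=1}^m (e_i P)^{\sigma_j} = \sum_{j=1}^m \sigma_j(e_i)\, P^{\sigma_j} \in \mathcal K[x_1,\dots,x_n]$, where I use that $(e_i P)^{\sigma_j} = e_i^{\sigma_j} P^{\sigma_j} = \sigma_j(e_i) P^{\sigma_j}$. Writing these relations for $i=1,\dots,m$ simultaneously, I get the vector identity
\begin{equation}\label{vectoridentity}
\begin{bmatrix} \mathrm{Tr}(e_1 P) \\ \vdots \\ \mathrm{Tr}(e_m P) \end{bmatrix}
= A^{t}\, \begin{bmatrix} P^{\sigma_1} \\ \vdots \\ P^{\sigma_m} \end{bmatrix},
\end{equation}
where $A$ is the matrix from the excerpt and $A^t$ its transpose (the entry in row $i$, column $j$ of $A^t$ is $\sigma_j(e_i)$, matching the coefficient of $P^{\sigma_j}$ above). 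Since $\det A = \det A^t \neq 0$ by Lemma \ref{invertibilidad}, the matrix $A^t$ is invertible over $\mathcal L$, so $(P^{\sigma_1},\dots,P^{\sigma_m})^t = (A^t)^{-1} (\mathrm{Tr}(e_1P),\dots,\mathrm{Tr}(e_mP))^t$. Reading off the first coordinate (recall $\sigma_1 = e$, so $P^{\sigma_1}=P$) exhibits $P$ as an $\mathcal L$-linear combination of the $\mathrm{Tr}(e_i P)$, which is exactly the claimed membership $P \in \mathrm{Span}_{\mathcal L}(\mathrm{Tr}(e_1P),\dots,\mathrm{Tr}(e_mP))$.

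Now I would deduce part (2). Let $I$ be a $\Gamma$-invariant ideal, and let $J$ be the ideal of $\mathcal L[x_1,\dots,x_n]$ generated by $I \cap \mathcal K[x_1,\dots,x_n]$; clearly $J \subseteq I$. For the reverse inclusion, take any $P \in I$. Since $I$ is $\Gamma$-invariant, $P^{\sigma_j} \in I$ for all $j$, and since each $e_i \in \mathcal L \subseteq \mathcal L[x_1,\dots,x_n]$, the element $\mathrm{Tr}(e_i P) = \sum_j \sigma_j(e_i) P^{\sigma_j}$ lies in $I$; it also lies in $\mathcal K[x_1,\dots,x_n]$ because the trace map lands there. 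Hence $\mathrm{Tr}(e_i P) \in I \cap \mathcal K[x_1,\dots,x_n] \subseteq J$ for every $i$. By part (1), $P$ is an $\mathcal L$-linear combination of these elements of $J$, so $P \in J$. Therefore $I = J$ is generated by $I \cap \mathcal K[x_1,\dots,x_n]$, as desired. (By Hilbert's basis theorem one may further extract a finite generating set, though the statement does not require it.)

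The only genuine input is Lemma \ref{invertibilidad}, which is quoted; the rest is bookkeeping. The one point demanding a little care — the main "obstacle," such as it is — is the identity \eqref{vectoridentity}: one must track correctly that applying $\sigma_j$ to the product $e_i P$ multiplies the polynomial $P^{\sigma_j}$ by the scalar $\sigma_j(e_i)$, and that the resulting coefficient matrix is precisely $A^t$ (not $A$), so that Lemma \ref{invertibilidad} applies verbatim since $\det A^t = \det A \neq 0$. Everything else — that $\mathrm{Tr}$ maps into $\mathcal K[x_1,\dots,x_n]$, that $\sigma_1$ is the identity so the first coordinate recovers $P$ itself, and that $\Gamma$-invariance of $I$ makes each $\mathrm{Tr}(e_iP)$ lie in $I$ — is immediate from the definitions in the excerpt.
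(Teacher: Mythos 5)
Your proof is correct and follows essentially the same route as the paper: both express $P$ as an $\mathcal{L}$-linear combination of the traces ${\rm Tr}(e_{i}P)$ via the invertibility of the matrix $A$ (the paper solves $A\lambda=E$ for the coefficients, which is the same computation as your inversion of $A^{t}$), and both deduce (2) by observing that these traces lie in $I \cap {\mathcal K}[x_{1},\ldots,x_{n}]$. No gaps; your write-up is, if anything, slightly more explicit about the ideal $J$ generated by $I \cap {\mathcal K}[x_{1},\ldots,x_{n}]$.
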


\begin{proof} For each $j=1, \ldots, m$ we define
$$
Q_{j}:={\rm Tr}(e_{j}P) \in {\mathcal L}[x_{1},\ldots,x_{n}],$$and notice that $Q_{j} \in {\mathcal K}[x_{1},\ldots,x_{n}]$ because $Q_{j}^{\sigma}=Q_{j}$ for each $\sigma \in \Gamma.$

As the matrix $A$ is non-singular, there are  
%
%
%
%the linear transformation $A:{\mathcal L}^{m} \to {\mathcal L}^{m}$ 
%%
%%(where ${\mathcal L}^{n}$ is thought as the vector space of columns of length $m$ and coefficients in ${\mathcal L}$) 
%%
%is invertible. In particular, this ensures 
%
%the existence of  
values $\lambda_{1},\ldots,\lambda_{m} \in {\mathcal L}$ so that
$A\lambda=E$, where
$\lambda=\mbox{}^{t}[\lambda_{1}\; \lambda_{2} \; \ldots \; \lambda_{m}]$ and $E=\mbox{}^{t}[1 \; 0 \; \ldots \; 0]$. In other words, we have $$ \Sigma_{j=1}^m \lambda_j e_j=1 \,\, \mbox{ and } \,\, \Sigma_{j=1}^m \lambda_j\sigma_k(e_j)=0, \,\, \mbox{for each }\,\,  k=2, \ldots, m; $$hence $P$ can be written as $$\lambda_{1}Q_{1}+\cdots+\lambda_{m}Q_{m}=\left(\Sigma_{j=1}^m \lambda_j e_j\right)  P + \Sigma_{k=2}^m \left( P^{\sigma_k}  \Sigma_{j=1}^m \lambda_j \sigma_k(e_j)\right)$$and the first statement is proved. The second statement follows from the first one together with the fact that, if $P^{\sigma} \in I$ for every $\sigma \in \Gamma$ then
$Q_{j} \in I$. 
\end{proof}

%For instance, if in Lemma \ref{polinv} we consider ${\mathcal K}={\mathbb R}$, ${\mathcal L}={\mathbb C}$, $e_{1}=1$ and $e_{2}=i$ then $\lambda_{1}=1/2$, $\lambda_{2}=i/2$ and $P=\frac{1}{2} {\rm Tr}(P) -\frac{i}{2} {\rm Tr}(iP)$.

\begin{lemm}\label{inv}
Let $Y \subset {{\mathcal C}}^{n}$ be an affine algebraic variety defined over ${\mathcal L}$. If $Y^{\sigma}=Y$ for every $\sigma \in \Gamma$  then $Y$ is defined over ${\mathcal K}$. More precisely, if $Y$ is defined by the polynomials $P_{1}, \ldots ,P_{r} \in {\mathcal L}[x_{1},...,x_{n}]$ then $Y$ is also defined by the polynomials ${\rm Tr}(e_{j}P_{i}) \in {\mathcal K}[x_{1}, \ldots, x_{n}]$, where $i \in \{1, \ldots ,r\}$ and $j \in \{1,\ldots ,m\}$.
\end{lemm}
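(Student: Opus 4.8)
The plan is to introduce the common zero locus $W\subseteq{\mathcal C}^{n}$ of the finite family of polynomials $\{{\rm Tr}(e_{j}P_{i})\}$, with $i\in\{1,\dots,s\}$ and $j\in\{1,\dots,m\}$, and to establish the equality $W=Y$. I would first record that each ${\rm Tr}(e_{j}P_{i})$ actually lies in ${\mathcal K}[x_{1},\dots,x_{n}]$: this is the computation already used in the proof of Lemma \ref{polinv}, since for every $\tau\in\Gamma$ one has $({\rm Tr}(e_{j}P_{i}))^{\tau}=\sum_{k=1}^{m}(e_{j}P_{i})^{\tau\sigma_{k}}=\sum_{k=1}^{m}(e_{j}P_{i})^{\sigma_{k}}={\rm Tr}(e_{j}P_{i})$, because left multiplication by $\tau$ permutes $\Gamma$; hence ${\rm Tr}(e_{j}P_{i})$ is $\Gamma$-invariant and therefore has coefficients in ${\mathcal K}$. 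Once $W=Y$ is proved, $Y$ is thereby exhibited as the common zero locus of a finite set of polynomials with coefficients in ${\mathcal K}$, which is exactly the assertion.

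For the inclusion $Y\subseteq W$ I would fix $p\in Y$ and $\sigma\in\Gamma$. By hypothesis $Y=Y^{\sigma}$, and by definition $Y^{\sigma}$ is the common zero locus of $P_{1}^{\sigma},\dots,P_{s}^{\sigma}$; hence $P_{i}^{\sigma}(p)=0$ for every $i$ and every $\sigma\in\Gamma$. Writing ${\rm Tr}(e_{j}P_{i})=\sum_{k=1}^{m}(e_{j}P_{i})^{\sigma_{k}}=\sum_{k=1}^{m}\sigma_{k}(e_{j})\,P_{i}^{\sigma_{k}}$ and evaluating at $p$ gives ${\rm Tr}(e_{j}P_{i})(p)=0$ for all $i,j$, so $p\in W$. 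For the reverse inclusion $W\subseteq Y$, I would fix $p\in W$ and apply Lemma \ref{polinv}(1) to each $P_{i}$: there are scalars $\lambda_{1},\dots,\lambda_{m}\in{\mathcal L}$ (the same for all $i$, as produced in the proof of Lemma \ref{polinv}) with $P_{i}=\sum_{j=1}^{m}\lambda_{j}\,{\rm Tr}(e_{j}P_{i})$. Evaluating at $p$ yields $P_{i}(p)=0$ for every $i$, hence $p\in Y$; this completes the double inclusion and gives $W=Y$.

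I do not expect a genuine obstacle: the inclusion $Y\subseteq W$ is forced by the invariance hypothesis $Y^{\sigma}=Y$, and the inclusion $W\subseteq Y$ is forced by Lemma \ref{polinv}(1). The only points that need a little care are to keep the logical direction straight in $Y\subseteq W$ --- it is the hypothesis $Y=Y^{\sigma}$ for all $\sigma$, not merely the fact that $Y$ is defined over ${\mathcal L}$, that forces every conjugate $P_{i}^{\sigma}$ to vanish on $Y$ --- and the elementary remark, recorded above, that the trace of a polynomial with coefficients in ${\mathcal L}$ has coefficients in ${\mathcal K}$. If one prefers an ideal-theoretic statement, the same observations show that the ideal generated by all the $P_{i}^{\sigma}$ is $\Gamma$-invariant and Lemma \ref{polinv}(2) applies; but the direct argument above already delivers the explicit generators ${\rm Tr}(e_{j}P_{i})$ named in the statement.
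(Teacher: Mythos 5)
Your proof is correct, but it takes a somewhat different route from the paper's. The paper argues at the level of the ideal: it sets $I=I(Y)\subset {\mathcal C}[x_{1},\ldots,x_{n}]$, chooses for each $\sigma\in\Gamma$ a lift $\eta\in\widehat{\Gamma}$ with $\rho(\eta)=\sigma$, and uses the evaluation identity $0=\eta(P(b_{1},\ldots,b_{n}))=P^{\sigma}(\widehat{\eta}(b_{1},\ldots,b_{n}))$ together with the bijection $\widehat{\eta}:Y\to Y$ (coming from $Y^{\sigma}=Y$) to conclude that $P^{\sigma}\in I$ for every $P\in I\cap{\mathcal L}[x_{1},\ldots,x_{n}]$, and then invokes Lemma \ref{polinv}. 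You instead work pointwise with the given generators: the hypothesis $Y^{\sigma}=Y$, plus the fact that $Y^{\sigma}$ is cut out by $P_{1}^{\sigma},\ldots,P_{s}^{\sigma}$, gives $P_{i}^{\sigma}(p)=0$ for $p\in Y$, hence the traces vanish on $Y$; and Lemma \ref{polinv}(1) recovers each $P_{i}$ as an ${\mathcal L}$-combination of its traces, giving the reverse inclusion. Your check that ${\rm Tr}(e_{j}P_{i})$ is $\Gamma$-invariant is the same computation the paper relies on. Your version is more elementary (no lift $\eta$, no passage through the full ideal) and it delivers the explicit generators ${\rm Tr}(e_{j}P_{i})$ of the ``more precisely'' clause directly. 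What the paper's ideal-level argument buys is literal agreement with its definition of ``defined over ${\mathcal K}$'', namely that the ideal $I(Y)$ is generated by polynomials with coefficients in ${\mathcal K}$; your set-theoretic equality of zero loci yields that stronger statement only via the ideal-theoretic remark you append at the end (the ideal generated by all $P_{i}^{\sigma}$ is $\Gamma$-stable, so Lemma \ref{polinv}(2) applies), or via the standard fact that over a perfect field a locus cut out by ${\mathcal K}$-polynomials is defined over ${\mathcal K}$ in the ideal sense. Since you record that variant, there is no gap.
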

\begin{proof} As $Y$ is defined over ${\mathcal L}$, its associated ideal of polynomials $\mathbb{I}<{\mathcal C}[x_{1},\ldots,x_{n}]$ is generated by a finite collection of polynomials with coefficients in ${\mathcal L}$. Let $P \in \mathbb{I} \cap {\mathcal L}[x_{1},\ldots,x_{n}]$. Let $\sigma \in \Gamma$ and let 
$\eta \in {\rm Gal}({\mathcal C}/{\mathcal K})$ be so that $\rho(\eta)=\sigma$. Then, for $(b_{1},\ldots,b_{n}) \in Y$, it holds that\begin{equation*}
0  = \sigma(P(b_{1}, \ldots ,b_{n}))=\eta(P(b_{1}, \ldots ,b_{n}))    = P^{\eta}(\eta(b_{1}), \ldots ,\eta(b_{n})) = P^{\sigma} \circ \hat{\eta}(b_{1}, \ldots ,b_{n}).
\end{equation*} 
As $Y^{\sigma}=\hat{\eta}(Y)$ and we are assuming $Y^{\sigma}=Y$, we have that  
$\hat{\eta}:Y \to Y$ is a bijection. So, the above asserts that $P^{\sigma}(c_{1}, \ldots ,c_{n})=0$ for each $(c_{1}, \ldots ,c_{n}) \in Y$. This ensures that $P^{\sigma} \in \mathbb{I}$ and the desired result follows directly from Lemma \ref{polinv}.
\end{proof}
%
%\s
%
%We will make also use of the following finiteness result from classical invariant theory.
%Let $V$ be a finite dimensional vector space over a field ${\mathcal K}$, say of dimension $n \geq 1$. Let $x_{1}, \ldots, x_{n}$ be a basis of $V$. The symmetric algebra ${\mathcal K}[V]$ of $V$ can be identified with the free unitary associative algebra generated by $x_{1}$, \ldots, $x_{n}$ over ${\mathcal K}$, that is, with the algebra of polynomials with variables $x_1, \ldots, x_n$ and coefficients in ${\mathcal K}$. If $\Gamma$ is a group acting linearly over $V,$ then that action extends naturally to the diagonal action on ${\mathcal K}[V]$.
%
%
%\begin{theo}[D. Hilbert - E. Noether \cite{Noether1, Noether2}]\label{Hilbert-Noether}
%Let $V$ be a finite dimensional vector space over a field ${\mathcal K}$. If $\Gamma$ is a finite group acting linearly over $V$ then the algebra of $\Gamma-$invariants ${\mathcal K}[V ]^{\Gamma}$ is finitely generated.
%\end{theo}
%
%(see \cite{Noether1, Noether2} and also \cite[Ch. 14]{Procesi})
%
%

%%%%%%%%%%%%%%%%%%%%%%%
%%%%%%%%%%%%%%%%%%%%%%%
\section{Constructive Proof of Weil's Galois  descent theorem}

Let us assume that the algebraic variety $X \subset {\mathcal C}^{n}$ is defined by the  polynomials $P_{1},\ldots , P_{r} \in {\mathcal L}[x_{1}, \ldots ,x_{n}]$ and that we have a Galois descent datum $$\{f_{\sigma_{j}}: X \to X^{\sigma_j}\}_{j=1}^{m}$$for $X$ with respect to the Galois extension ${\mathcal L}/{\mathcal K},$ where $\Gamma={\rm Gal}({\mathcal L}/{\mathcal K})=\{\sigma_{1}=e,\ldots ,\sigma_{m}\}$.

By Lemma \ref{inv}, if $X^{\sigma}=X$ for every $\sigma \in \Gamma$ then $X$ is already defined over ${\mathcal K}$. Thus, from now on, we assume this is not the case.

\begin{lemm}\label{supuesto}
We can assume that $X^{\sigma_{i}} \cap X^{\sigma_{j}}=\emptyset$ for $i \neq j$. 
\end{lemm}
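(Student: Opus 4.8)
The idea is to move $X$ to a new birationally isomorphic model $\widetilde X$ living in a larger affine space, by composing the natural embedding with a translation depending on a chosen basis of $\mathcal L/\mathcal K$, so that the $m$ conjugate copies $\widetilde X^{\sigma_1},\dots,\widetilde X^{\sigma_m}$ become pairwise disjoint. Concretely, choose a primitive element $t\in\mathcal L$, so that $\mathcal L=\mathcal K(t)$ and the conjugates $\sigma_1(t),\dots,\sigma_m(t)$ are pairwise distinct. Define $\iota:X\to{\mathcal C}^{n+1}$ by $\iota(x_1,\dots,x_n)=(x_1,\dots,x_n,t)$; this is a biregular isomorphism onto its image $\widetilde X$, defined over $\mathcal L$, whose inverse is the (restriction of the) projection onto the first $n$ coordinates. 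The point is that the last coordinate is a global constant on $\widetilde X$, and that constant is not $\Gamma$-invariant.

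The key step is then to observe how conjugation acts on this model. For $\sigma\in\Gamma$, the variety $\widetilde X^{\sigma}$ is cut out by the polynomials $P_i^{\sigma}(x_1,\dots,x_n)$ together with the linear polynomial $x_{n+1}-\sigma(t)$; in particular every point of $\widetilde X^{\sigma}$ has last coordinate equal to $\sigma(t)$. Hence if $\sigma_i\neq\sigma_j$ then any point of $\widetilde X^{\sigma_i}$ has last coordinate $\sigma_i(t)\neq\sigma_j(t)$, which is the last coordinate of any point of $\widetilde X^{\sigma_j}$; therefore $\widetilde X^{\sigma_i}\cap\widetilde X^{\sigma_j}=\emptyset$. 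Finally one checks that the Galois descent datum transports: setting $\widetilde f_\sigma:=\iota^{\sigma}\circ f_\sigma\circ\iota^{-1}:\widetilde X\to\widetilde X^{\sigma}$ (which is legitimate since $\iota^{-1}$ is a morphism $\widetilde X\to X$, $f_\sigma$ maps $X\to X^\sigma$, and $\iota^\sigma$ is a morphism $X^\sigma\to\widetilde X^\sigma$), the cocycle relation $\widetilde f_{\sigma_1\sigma_2}=\widetilde f_{\sigma_2}^{\sigma_1}\circ\widetilde f_{\sigma_1}$ follows from $f_{\sigma_1\sigma_2}=f_{\sigma_2}^{\sigma_1}\circ f_{\sigma_1}$ together with $(\iota^{-1})^{\sigma_1}=(\iota^{\sigma_1})^{-1}$ and $\iota^{\sigma_1\sigma_2}=(\iota^{\sigma_1})^{\sigma_2}$, using that $\iota^{-1}$ is just a coordinate projection, hence defined over $\mathcal K$ and fixed by all conjugations. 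Replacing $X$ by $\widetilde X$ and $\{f_\sigma\}$ by $\{\widetilde f_\sigma\}$ proves the lemma.

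The only point requiring care — and what I would flag as the main (mild) obstacle — is bookkeeping with the conjugation action on the added coordinate and on $\iota$: one must make sure that "applying $\sigma$ to the coefficients" of the embedding really turns the constant $t$ into $\sigma(t)$, and that all the composition identities among $\iota,\iota^{\sigma},\iota^{-1}$ are compatible with the $\widehat\eta$-actions from Diagram \eqref{diagrama0}. None of this is deep, but it is exactly the kind of routine verification where an off-by-one in the exponents would break the cocycle condition, so the write-up should spell it out. (If $\mathcal L/\mathcal K$ is not separable a primitive element need not exist, but here $\mathcal K$ is perfect and the extension is finite Galois, hence simple, so a primitive element is available.)
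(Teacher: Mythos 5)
Your proof is correct and follows essentially the same strategy as the paper: enlarge the ambient space by appending constant coordinate(s) so that the conjugate models are forced to differ in those coordinates, and transport the descent datum via $\iota^{\sigma}\circ f_{\sigma}\circ\iota^{-1}$ (the paper uses $Q^{\sigma}\circ f_{\sigma}\circ Q^{-1}$). The only difference is cosmetic: the paper appends $m-1$ coordinates with constants $a_{j}$ satisfying $\sigma_{j}(a_{j})\neq a_{j}$, whereas you append a single coordinate carrying a primitive element, which is a slightly more economical variant of the same idea.
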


\begin{proof}
For each $\sigma_{j} \neq e$ we may find some $a_{j} \in {\mathcal L}$ so that $\sigma_{j}(a_{j})\neq a_{j}$.  Then we may consider the algebraic variety $\hat{X} \subset {\mathcal C}^{n+m-1}$ defined by the polynomials
$$P_{1}(x_{1},\ldots,x_{n})=\cdots= P_{r}(x_{1}, \ldots ,x_{n})=0, x_{n+1}=a_{2},\ldots, x_{n+m-1}=a_{m}. $$

Clearly, the map $$Q:X \to \hat{X} \,\, \mbox{ given by } \,\, (x_{1},\ldots,x_{n}) \mapsto (x_{1},\ldots,x_{n},a_{2},\ldots,a_{m})$$
defines a biregular isomorphism whose inverse is provided by the projection 
$$Q^{-1}:\hat{X} \to X\,\, \mbox{ given by } \,\, (x_{1},\ldots,x_{n},a_{2},\ldots,a_{m}) \to (x_{1},\ldots,x_{n}).$$

By the construction, we see that $\hat{X}^{\sigma_{i}} \cap \hat{X}^{\sigma_{j}}=\emptyset$ for every $i \neq j$. It is easy to check that a Galois descent datum for $\hat{X}$ is given by
$\{g_{\sigma}=  Q^{\sigma} \circ f_{\sigma} \circ Q^{-1}: \sigma \in \Gamma\}.$
\end{proof}

%By Lemma \ref{supuesto} we assume that,  for $i \neq j$, one has that $X^{\sigma_{i}} \cap X^{\sigma_{j}}=\emptyset$. 

%%%%%%%%%%%%%%%%
\subsection{A first isomorphism}
Let us consider the rational map
$$\Phi:X \to \Pi_{\sigma \in \Gamma} {{\mathcal C}}^{n} \,\, \mbox{ defined by } \,\, x \mapsto \left(f_{\sigma}(x)\right)_{\sigma \in \Gamma}.$$

If $f_{\sigma}(x)=(\tfrac{r_{1,\sigma}(x)}{s_{1,\sigma}(x)}, \ldots , \tfrac{r_{n,\sigma}(x)}{s_{n,\sigma}(x)}),$ 
where $s_{j,\sigma}$ and $r_{j,\sigma}$ are relatively prime polynomials (each one defined over ${\mathcal L}$), then the equality 
$$f_{\sigma}(y_{e})=y_{\sigma}=(y_{1,\sigma}, \ldots, y_{n,\sigma})$$ 
provides $n$ polynomial equations 
$$y_{j,\sigma}s_{j,\sigma}(y_{e})=r_{j,\sigma}(y_{e}).$$ 

We see that $\Phi(X)$ is the affine algebraic variety defined over ${\mathcal L}$ given by 
\begin{equation*}
\begin{aligned}
\Phi(X) ={} &\{ \left(y_{\sigma}\right)_{\sigma \in \Gamma}: P_{1}^{\sigma}(y_{\sigma})=\cdots=P_{r}^{\sigma}(y_{\sigma})=0, \\
      & y_{j,\sigma}s_{j,\sigma}(y_{e})=r_{j,\sigma}(y_{e}),  \sigma \in \Gamma, \; j=1,\ldots, n  \}.
\end{aligned}
\end{equation*}

%\begin{rema}
%By adding some extra variables, one may assume each $f_{\sigma}$ to be a polynomial (for the rest of the process we do not need this fact) and in this case
%$$\Phi(X)=\left\{ \left(y_{\sigma}\right)_{\sigma \in \Gamma}: P_{1}(y_{e})=\cdots=P_{r}(y_{e})=0, y_{\sigma}=f_{\sigma}(y_{e}), \; \sigma \in \Gamma  \right\}.$$
%\end{rema}

\begin{lemm}
The map $\Phi:X \to \Phi(X)$ is a birational isomorphism. Furthermore, $\Phi$ is biregular provided that each $f_{\sigma}$ is polynomial.
\end{lemm}

\begin{proof}
As each $f_{\sigma}$ is a birational isomorphism, $\Phi$ induces a birational isomorphism between $X$ and $\Phi(X)$. The inverse map is given by projection in the first coordinate (so it is regular) 
$$\pi:\Pi_{\sigma \in \Gamma} {{\mathcal C}}^{n} \to {\mathcal C}^{n} \,\, \mbox{ given by } \,\, 
\left(y_{\sigma}\right)_{\sigma \in \Gamma} \mapsto y_{e}.$$ 

If each $f_{\sigma}$ is a polynomial, then $\Phi$ is a regular map and the lemma follows.
\end{proof}

%%%%%%%%%%%%%
\subsection{A linear permutation action on $ \Pi_{\sigma \in \Gamma} {\mathcal C}^{n}$ induced by $\Gamma$}

Let us consider the following natural permutation action of $\Gamma$ on the $\sigma-$coordinates:
$$\Theta:\Gamma \times \Pi_{\sigma \in \Gamma} {\mathcal C}^{n} \to  \Pi_{\sigma \in \Gamma} {\mathcal C}^{n} \,\, \mbox{ given by } \,\, \left(\tau, \left(y_{\sigma}\right)_{\sigma \in \Gamma}\right) \mapsto \left(y_{\tau\sigma}\right)_{\sigma \in \Gamma}$$

\begin{lemm}\label{disjuntos} 
If $\tau \neq e$ then 
$\Theta(\tau)(\Phi(X)) \cap \Phi(X) = \emptyset$.
\end{lemm}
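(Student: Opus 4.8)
The plan is to trace the image of a point of $\Theta(\tau)(\Phi(X))$ back into $\Phi(X)$ and extract from its coordinates a point lying in two distinct conjugates $X^{\sigma_i}$ and $X^{\sigma_j}$, contradicting Lemma \ref{supuesto}. Recall that a generic point of $\Phi(X)$ has the form $\Phi(x)=\left(f_\sigma(x)\right)_{\sigma\in\Gamma}$, and by the commutative diagram \eqref{diagrama1} the map $\Theta(\tau)$ carries this to $\left(f_{\tau\sigma}(x)\right)_{\sigma\in\Gamma}$, which equals $\Phi^\tau(f_\tau(x))$; in particular $\Theta(\tau)(\Phi(X))=\Phi^\tau(X^\tau)=\Phi(X)^\tau$. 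So suppose, for contradiction, that there is a point $p$ lying in $\Theta(\tau)(\Phi(X))\cap\Phi(X)$.

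First I would write $p=(y_\sigma)_{\sigma\in\Gamma}$. Membership in $\Phi(X)$ forces $p=\Phi(x)$ for some $x\in X$, so the $\sigma$-th block is $y_\sigma=f_\sigma(x)\in X^\sigma$; in particular the $e$-th block $y_e=f_e(x)=x$ lies in $X=X^e$. Membership in $\Theta(\tau)(\Phi(X))=\Phi(X)^\tau$ forces $p=\Phi^\tau(x')$ for some $x'\in X^\tau$; applying $\Theta(\tau^{-1})$ we get $\Theta(\tau^{-1})(p)=\left(y_{\tau^{-1}\sigma}\right)_{\sigma\in\Gamma}\in\Phi(X)$, so there is $x''\in X$ with $y_{\tau^{-1}\sigma}=f_\sigma(x'')$ for all $\sigma$. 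Taking $\sigma=e$ here gives $y_{\tau^{-1}}=f_e(x'')=x''\in X$, i.e. the $\tau^{-1}$-th block of $p$ lies in $X$. But from the first description that same block is $y_{\tau^{-1}}=f_{\tau^{-1}}(x)\in X^{\tau^{-1}}$. Hence the point $y_{\tau^{-1}}$ lies in $X^e\cap X^{\tau^{-1}}$, which is empty since $\tau\neq e$ implies $\tau^{-1}\neq e$; this is the contradiction. (One can equally phrase it with the $e$-th block: $y_e$ lies in $X=X^e$ by the $\Phi(X)$-description, and equals $y_{\tau\cdot\tau^{-1}}$ with $y_{\tau^{-1}\sigma}=f_\sigma(x'')$ evaluated at $\sigma=\tau$, giving $y_e=f_\tau(x'')\in X^\tau$, so $X^e\cap X^\tau\neq\emptyset$.)

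The only genuinely delicate point is the bookkeeping with the permutation action: one must be careful that $\Theta(\tau)$ sends the $\sigma$-block to the $\tau\sigma$-block (equivalently, that $\Theta(\tau)^{-1}=\Theta(\tau^{-1})$ sends the $\sigma$-block to the $\tau^{-1}\sigma$-block), and then pick the index that makes one of the two conflicting blocks the $e$-block (or two distinct conjugates). I also need the extra normalization from Lemma \ref{supuesto}, namely that $X^{\sigma_i}\cap X^{\sigma_j}=\emptyset$ for $i\neq j$; this is precisely why that reduction was made, and without it the statement is false in general (e.g. if $X$ is already defined over ${\mathcal K}$ and all $f_\sigma$ are identities, then $\Theta(\tau)(\Phi(X))=\Phi(X)$). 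Everything else is formal manipulation of the definitions of $\Phi$ and $\Theta$ together with the cocycle relation $f_{\tau\sigma}=f_\sigma^\tau\circ f_\tau$ already recorded above, so no further computation is required.
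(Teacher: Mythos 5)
Your proposal is correct and is essentially the paper's argument: the paper's proof is a one-liner that cites Diagram \eqref{diagrama1} to transfer a point of $\Theta(\tau)(\Phi(X)) \cap \Phi(X)$ to a point of $X \cap X^{\tau}$ and then invokes the disjointness normalization of Lemma \ref{supuesto}, which is exactly the reduction you carry out, only with the block-index bookkeeping written out explicitly. (Your displayed computations with $\Theta(\tau^{-1})(p)=(y_{\tau^{-1}\sigma})_{\sigma\in\Gamma}$ match the paper's convention $(\Theta(\tau)y)_{\sigma}=y_{\tau\sigma}$, even though the informal phrase ``sends the $\sigma$-block to the $\tau\sigma$-block'' reads as the opposite convention.)
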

\begin{proof}
Let $(y_{\sigma})_{\sigma \in \Gamma} \in \Theta(\tau)(\Phi(X)) \cap \Phi(X)$. The condition $(y_{\sigma})_{\sigma \in \Gamma} \in \Phi(X)$ ensures that $y_{\sigma} \in X^{\sigma}$ and, in particular,  $y_{\tau^{-1}} \in X^{\tau^{-1}}$. Moreover, the condition 
$(y_{\sigma})_{\sigma \in \Gamma} \in \Theta(\tau)(\Phi(X))$ ensures that $\Theta(\tau^{-1})(y_{\sigma})_{\sigma \in \Gamma}=(y_{\tau^{-1}\sigma})_{\sigma \in \Gamma} \in \Phi(X)$, that is, $y_{\tau^{-1}} \in X$. The above implies that $X \cap X^{\tau^{-1}} \neq \emptyset$, which is not possible by  Lemma \ref{supuesto}. 
\end{proof}

%%%%%%%%%%%%%%%%%%%%
\subsection{A linearization of the Galois descent datum}
Let $\tau \in \Gamma$ and let $\eta \in {\rm Gal}({\mathcal C}/{\mathcal K})$ be so that $\rho(\eta)=\tau$. As $f_{\tau\sigma}=f_{\sigma}^{\tau} \circ f_{\tau}$, we see that
$$\left(\Phi^{\tau} \circ f_{\tau}\right)(x)=\left(f_{\sigma}^{\tau}(f_{\tau}(x))\right)_{\sigma \in \Gamma}=(f_{\tau \sigma}(x))_{\sigma \in \Gamma}=\left(\Theta(\tau) \circ \Phi\right)(x)$$
and, as
$f_{\sigma}^{\tau}=f_{\sigma}^{\eta}=\hat{\eta} \circ f_{\sigma} \circ \hat{\eta}^{\;-1}$, where $\hat{\eta}$ acts on $\Pi_{\sigma \in \Gamma} {\mathcal C}^{n}$,
we obtain the following equality
$$\left(\Phi \circ \hat{\eta}^{\;-1} \circ f_{\tau}\right)(x))=\left(f_{\sigma}(\hat{\eta}^{\;-1}(f_{\tau}(x)))\right)_{\sigma \in \Gamma}=\left(\hat{\eta}^{\;-1}(f^{\tau}_{\sigma}(f_{\tau}(x)))\right)_{\sigma \in \Gamma}=$$
$$=\left(\hat{\eta}^{\;-1}(f_{\tau\sigma}(x))\right)_{\sigma \in \Gamma}=\left(\hat{\eta}^{\;-1} \circ \Theta(\tau) \circ \Phi \right)(x).$$

The above can be summarized in the following commutative diagram.
\begin{equation}\label{diagrama0}
\begin{tikzpicture}[node distance=6.5 cm, auto]
  \node (P) {$X$};
  \node (Q) [right of=P] {$\Phi(X)$};
   \node (A) [below of=P, node distance=1.4 cm] {$X^{\tau}$};
    \node (L) [below of=A, node distance=1.4 cm] {$X$};
  \node (C) [below of=Q, node distance=1.4 cm] {$\Theta(\tau)(\Phi(X))=\Phi^{\tau}(X^{\tau}) =\Phi(X)^{\tau}$};
  \node (M) [below of=C, node distance=1.4 cm] {$\Phi(X)$};
 \draw[->] (L) to node {$\Phi$} (M);
  \draw[->] (A) to node {$\hat{\eta}^{-1}$} (L);
 \draw[->] (C) to node {$\hat{\eta}^{-1}$} (M);
  \draw[->] (P) to node {$\Phi$} (Q);
  \draw[->] (A) to node {$\Phi^{\tau}$} (C);
  \draw[->] (P) to node [swap] {$f_{\tau}$} (A);
    \draw[->] (Q) to node {$\Theta(\tau)$} (C);
\end{tikzpicture}
\end{equation}

Similarly, it is not difficult to see that, for every $\tau \in \Gamma$ and every $\eta \in {\rm Gal}({\mathcal C}/{\mathcal K})$, we have that 
\begin{equation*} \Theta(\tau) \circ \hat{\eta}=\hat{\eta} \circ \Theta(\tau).
\end{equation*}

\begin{rema}
Note that, from the commutative diagram \eqref{diagrama0},  each isomorphism $f_{\sigma}:X \to X^{\sigma}$ is transformed into the linear (permutation) isomorphism $\Theta(\sigma):\Phi(X) \to \Phi(X)^{\sigma}$. So, the Galois descent datum $\{f_{\sigma}\}_{\sigma \in \Gamma}$ for $X$ is now transformed into the (linear) Galois descent datum $\{\Theta(\sigma)\}_{\sigma \in \Gamma}$.  In other words, the above method says that we are able to change our model of $X$ (in an explicit manner) to $W=\Phi(X)$ for which the Galois descent datum is given by permutation linear transformations.
\end{rema}

%%%%%%%%%%%%%%%%
\subsection{A second isomorphism: Invariant theory}

As $\Theta(\Gamma) \cong \Gamma$ is a finite group of permutations, it follows from Hilbert-Noether's theorem (see \cite{Noether1, Noether2} and also \cite[Ch. 14]{Procesi}) that the algebra ${\mathcal C}[\Pi_{\sigma \in \Gamma} {\mathcal C}^{n}]^{\Theta(\Gamma)}$ of $\Theta(\Gamma)$-invariant polynomials with coefficients in ${\mathcal C},$ is finitely generated.

Let us consider a finite set of generators of such an ${\mathcal C}$-algebra, say 
$$E_{1}((y_{\sigma})_{\sigma \in \Gamma}),\ldots,E_{N}((y_{\sigma})_{\sigma \in \Gamma}) \in {\mathcal C}[(y_{\sigma})_{\sigma \in \Gamma}]^{\Theta(\Gamma)}.$$

At this point it is important to note that the finite permutation of coordinates produced by the action of $\Theta(\Gamma)$ does not depend on the field ${\mathcal C}$, that is, we may consider this permutation action on the product space $\Pi_{\sigma \in \Gamma} {\mathcal B}$, where ${\mathcal B}$ is the basic field of ${\mathcal C}$. It follows, in particular, that 
$$E_{1}((y_{\sigma})_{\sigma \in \Gamma}),\ldots, E_{N}((y_{\sigma})_{\sigma \in \Gamma}) \in {\mathcal K}[(y_{\sigma})_{\sigma \in \Gamma}].$$

For the constructiveness part we need to have explicitly computed such polynomials $E_{j}$ (this can be done, for instance, with MAGMA \cite{MAGMA} or Macauley2 \cite{Mac2}). 

Consider the regular map
$$\Psi:\Pi_{\sigma \in \Gamma} {\mathcal C}^{n} \to {\mathcal C}^{N} \,\, \mbox{ given by } \,\, (y_{\sigma})_{\sigma \in \Gamma} \mapsto (E_{1}((y_{\sigma})_{\sigma \in \Gamma}),\ldots, E_{N}((y_{\sigma})_{\sigma \in \Gamma})).$$

\begin{lemm}\label{cubriente}
The regular map $\Psi$ satisfies the following properties:

\begin{enumerate}

\item[(a)] $\Psi^{\sigma}=\Psi$ for every $\sigma \in \Gamma.$
\item[(b)] $\Psi \circ \Theta(\sigma)=\Psi$ for every $\sigma \in \Gamma.$
\item[(c)] if $\Psi(w)=\Psi(z)$ then there is some $\sigma \in \Gamma$ so that $w=\Theta(\sigma)(z).$
\item[(d)] $Y=\Psi(\Phi(X))$ and $\Psi(\Pi_{\sigma \in \Gamma}{\mathcal C}^{n})$ are algebraic subvarieties of ${\mathcal C}^{N}$.
\end{enumerate}
\end{lemm}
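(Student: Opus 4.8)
\emph{Plan.} I would treat the four assertions in turn, the only substantial one being (3).

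Assertions (1) and (2) are formal. By the discussion preceding the lemma the generators $E_{1},\ldots,E_{N}$ may be, and are, chosen with coefficients in ${\mathcal K}$, because the permutation of coordinates produced by $\Theta(\Gamma)$ is purely combinatorial and independent of the ground field; since each $\sigma\in\Gamma$ fixes ${\mathcal K}$ pointwise, applying $\sigma$ to the coefficients of $E_{j}$ changes nothing, so $\Psi^{\sigma}=(E_{1}^{\sigma},\ldots,E_{N}^{\sigma})=(E_{1},\ldots,E_{N})=\Psi$, which is (1). For (2), by construction each $E_{j}$ lies in ${\mathcal C}[(y_{\sigma})_{\sigma\in\Gamma}]^{\Theta(\Gamma)}$, i.e. $E_{j}\circ\Theta(\tau)=E_{j}$ for all $\tau\in\Gamma$, so $\Psi\circ\Theta(\tau)=\Psi$.

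For (3) I would prove that the invariant ring ${\mathcal C}[(y_{\sigma})_{\sigma\in\Gamma}]^{\Theta(\Gamma)}$ --- and hence the tuple $\Psi=(E_{1},\ldots,E_{N})$ of its generators --- separates the orbits of the finite group $G:=\Theta(\Gamma)$ acting on $\prod_{\sigma\in\Gamma}{\mathcal C}^{n}$. Let $w,z$ be in distinct $G$-orbits $O_{w},O_{z}$; these are finite, disjoint subsets of $\prod_{\sigma\in\Gamma}{\mathcal C}^{n}$, so, ${\mathcal C}$ being infinite, interpolation produces a polynomial $p$ with $p\equiv 1$ on $O_{w}$ and $p\equiv 0$ on $O_{z}$. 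Put $q:=\prod_{\gamma\in G}(p\circ\gamma)$. Reindexing the product shows $q\circ\mu=q$ for all $\mu\in G$, so $q$ is $G$-invariant; and $q(w)=\prod_{\gamma\in G}p(\gamma w)=1$ (each $\gamma w\in O_{w}$) whereas $q(z)=\prod_{\gamma\in G}p(\gamma z)=0$ (each $\gamma z\in O_{z}$). Expressing $q$ as a polynomial in $E_{1},\ldots,E_{N}$ then shows $\Psi(w)=\Psi(z)$ is impossible; equivalently, $\Psi(w)=\Psi(z)$ forces $w\in O_{z}$, i.e. $w=\Theta(\sigma)(z)$ for some $\sigma\in\Gamma$. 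I expect this to be the main obstacle, the delicate point being that the argument must work in every characteristic; the orbit-product $q$ settles this, since it never divides by $|G|$.

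For (4) I would use that $\Psi$ is, up to the closed embedding defined by the $E_{j}$, the quotient morphism of $\prod_{\sigma\in\Gamma}{\mathcal C}^{n}$ by $G$. Let $J=\ker\bigl({\mathcal C}[t_{1},\ldots,t_{N}]\to{\mathcal C}[(y_{\sigma})_{\sigma\in\Gamma}],\ t_{j}\mapsto E_{j}\bigr)$; since the $E_{j}$ generate ${\mathcal C}[(y_{\sigma})_{\sigma\in\Gamma}]^{G}$, the image of this map is exactly that invariant ring, so ${\mathcal C}[(y_{\sigma})_{\sigma\in\Gamma}]^{G}\cong{\mathcal C}[t_{1},\ldots,t_{N}]/J$. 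By Theorem \ref{Hilbert-Noether} this ring is a finitely generated ${\mathcal C}$-algebra, hence Noetherian, and ${\mathcal C}[(y_{\sigma})_{\sigma\in\Gamma}]$ is integral over it (each coordinate $y$ is a root of the monic $\prod_{\gamma\in G}(T-\gamma\cdot y)$, whose coefficients are invariant), so ${\mathcal C}[(y_{\sigma})_{\sigma\in\Gamma}]$ is a finite module over ${\mathcal C}[(y_{\sigma})_{\sigma\in\Gamma}]^{G}$. Hence $\Psi$ is a finite morphism onto $V(J)$ (surjectivity onto $V(J)$ being the lying-over property of integral extensions), so $\Psi\bigl(\prod_{\sigma\in\Gamma}{\mathcal C}^{n}\bigr)=V(J)$ is an algebraic subvariety of ${\mathcal C}^{N}$; and, finite morphisms being closed and a closed immersion followed by a finite morphism being finite, $\Psi(Z)$ is an algebraic subvariety of ${\mathcal C}^{N}$ for every closed subvariety $Z\subseteq\prod_{\sigma\in\Gamma}{\mathcal C}^{n}$. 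Applying this with $Z=\Phi(X)$, which is a closed subvariety by the description of $\Phi(X)$ given earlier, shows that $Y=\Psi(\Phi(X))$ is an algebraic subvariety of ${\mathcal C}^{N}$, completing the proof.
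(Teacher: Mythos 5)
Your proof is correct, but it follows a genuinely different route from the paper on the two non-trivial points. For (1) and (2) you argue exactly as the paper intends (the $E_{j}$ can be taken with coefficients in ${\mathcal K}$ because the permutation action is combinatorial, and invariance gives (2)); the paper simply declares these ``easy to check''. For (3), the paper invokes geometric invariant theory: a finite group is reductive \cite{Hilbert,Mumford}, and for a reductive group the map built from generators of the invariant ring is a regular branched cover with the group as deck group, so it separates orbits. You instead give a self-contained, elementary orbit-separation argument: interpolate a polynomial $p$ that is $1$ on the orbit of $w$ and $0$ on the orbit of $z$, pass to the invariant $q=\prod_{\gamma\in\Theta(\Gamma)}(p\circ\gamma)$, and express $q$ through $E_{1},\ldots,E_{N}$ to contradict $\Psi(w)=\Psi(z)$. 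This buys independence from the reductive-group machinery and, as you note, works in every characteristic, which fits the paper's setting of an arbitrary perfect field ${\mathcal K}$ better than the cited statement phrased over ${\mathbb C}$; the paper's citation, on the other hand, delivers in one stroke the stronger branched-cover description of $\Psi$ that is used in the subsequent discussion. For (4) the paper only says it ``follows from the previous ones'', whereas you supply an actual mechanism: $\,{\mathcal C}[(y_{\sigma})_{\sigma\in\Gamma}]$ is integral (indeed finite, using Theorem \ref{Hilbert-Noether}) over the invariant ring, so $\Psi$ is a finite morphism onto $V(J)$, finite morphisms are closed, and hence images of closed subvarieties such as $\Phi(X)$ are algebraic subsets of ${\mathcal C}^{N}$. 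This is a more complete justification than the paper's; the only point you inherit from the paper rather than prove is that $\Phi(X)$ is itself closed, which is exactly what the paper's explicit system of equations for $\Phi(X)$ is meant to guarantee.
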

\begin{proof}
Properties (a) and (b) are easy to check. 
Property (c) is consequence of the fact that (i) a finite group is a reductive group \cite{Hilbert,Mumford} and (ii) for a reductive group $G$, say acting linearly over ${\mathcal C}^{d}$, and a set of generators of the $G-$invariant polynomials, say $I_{1},\ldots,I_{m} \in {\mathcal C}[z_{1},\ldots,z_{d}]$, the map $A=(I_{1},\ldots,I_{m}):{\mathcal C}^{d} \to {\mathcal C}^{m}$ turns out to be a regular branched cover map with $G$ as its deck group. In other words, $A(p)=A(q)$ if and only if there exists some $\alpha \in G$ so that $\alpha(p)=q$; the branch values of $A$ agree with the $a-$images of those points with non-trivial stabilizer (for details see, for instance, \cite{Hilbert,Mumford}). Property (d) follows from the previous ones.
\end{proof}

\begin{rema}
The algebra of regular maps on $\Psi(\Pi_{\sigma \in \Gamma} {\mathcal C}^{n})$ is known to be isomorphic to the algebra ${\mathcal C}[(y_{\sigma})_{\sigma \in \Gamma}]^{\Theta(\Gamma)}$ of symmetric polynomials with respect to the linear group $\Theta(\Gamma);$ see \cite{CLO}. This can be seen by considering the surjective homomorphism $\xi:{\mathcal C}[t_{1},\ldots,t_{N}] \to {\mathcal C}[(y_{\sigma})_{\sigma \in \Gamma}]^{\Theta(\Gamma)}$ defined by $\xi(p)=p(E_{1},\ldots,E_{N})$.
\end{rema}

Properties (b) and (c) of Lemma \ref{cubriente} assert that $\Psi$ is a finite regular (branched) cover with the finite algebraic group $\Theta(\Gamma)$ as its deck group. 

%The following  result states that its restriction to $\Phi(X)$ induces a biregular isomorphism onto its image algebraic variety $Y=\Psi(\Phi(X))$.

\begin{lemm}
The map $\Psi:\Phi(X) \to Y$ is a biregular isomorphism. In particular, $R=\Psi \circ \Phi:X \to Y$ is a birational isomorphism; it is biregular provided that each $f_{\sigma}$ is polynomial.
\end{lemm}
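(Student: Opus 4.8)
The plan is to write down an \emph{explicit} polynomial inverse of $R=\Psi\circ\Phi$. A soft argument will not suffice here: although we will see that $\Psi$ restricts to a bijection $\Phi(X)\to Y$, a bijective morphism onto a possibly non-normal variety need not be an isomorphism, and in positive characteristic it need not even be birational. So the constructive production of the inverse is essential, and it also fits the algorithmic goal of the paper.

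First I would check that $\Psi\colon\Phi(X)\to Y$ is injective. If $w,z\in\Phi(X)$ and $\Psi(w)=\Psi(z)$, then by Lemma~\ref{cubriente}(3) there is $\sigma\in\Gamma$ with $w=\Theta(\sigma)(z)$, so $w\in\Phi(X)\cap\Theta(\sigma)(\Phi(X))$; by the disjointness of the translates $\Theta(\tau)(\Phi(X))$ (which was derived from Lemma~\ref{supuesto}) this forces $\sigma=e$, hence $w=z$. Surjectivity is the definition $Y=\Psi(\Phi(X))$. Next I would produce a separating polynomial: since $\Phi(X)$ and $Z:=\bigcup_{\sigma\neq e}\Theta(\sigma)(\Phi(X))$ are disjoint closed subsets of $\prod_{\sigma\in\Gamma}{\mathcal C}^{n}$, their ideals are comaximal, so one may write $1=a+h$ with $a$ vanishing on $\Phi(X)$ and $h$ vanishing on $Z$; thus $h$ is a polynomial with $h\equiv 1$ on $\Phi(X)$ and $h\equiv 0$ on each $\Theta(\sigma)(\Phi(X))$, $\sigma\neq e$ (and $h$ may be taken over ${\mathcal L}$, since $\Phi(X)$ and the permutations $\Theta(\sigma)$ are defined over ${\mathcal L}$).

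Now for each $j=1,\dots,n$ I would set $w_{j}=\sum_{\delta\in\Gamma} y_{j,\delta}\,\bigl(h\circ\Theta(\delta)\bigr)$, where $y_{j,\delta}$ is the $j$-th coordinate of the $\delta$-block. Using $y_{j,\delta}\circ\Theta(\beta)=y_{j,\beta\delta}$ and $\Theta(\delta)\circ\Theta(\beta)=\Theta(\beta\delta)$ (note $\Theta$ composes in reversed order), the reindexing $\delta\mapsto\beta\delta$ shows $w_{j}\circ\Theta(\beta)=w_{j}$ for all $\beta$, so $w_{j}\in{\mathcal C}[(y_{\sigma})_{\sigma}]^{\Theta(\Gamma)}$; and on $\Phi(X)$ every summand with $\delta\neq e$ vanishes while the $\delta=e$ term is $y_{j,e}$, so $w_{j}|_{\Phi(X)}=y_{j,e}$. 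Writing $w_{j}=q_{j}(E_{1},\dots,E_{N})$, which is possible since the $E_{i}$ generate the invariants, I would define the morphism $T=(q_{1},\dots,q_{n})\colon{\mathcal C}^{N}\to{\mathcal C}^{n}$. The cocycle relation at $\sigma_{1}=\sigma_{2}=e$ gives $f_{e}=\mathrm{id}_{X}$, so the $e$-block of $\Phi(x)=(f_{\sigma}(x))_{\sigma}$ is $x$; hence, wherever $\Phi$ is defined, $T(R(x))=(w_{j}(\Phi(x)))_{j}=(y_{j,e}(\Phi(x)))_{j}=x$, i.e. $T\circ R=\mathrm{id}_{X}$, and for $y^{*}=R(x)\in Y$ one gets $R(T(y^{*}))=R(x)=y^{*}$, i.e. $R\circ T=\mathrm{id}$ on a dense subset of $Y$. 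Thus $R\colon X\to Y$ is a birational isomorphism with inverse $T|_{Y}$. If moreover each $f_{\sigma}$ is polynomial, then $\Phi$ is a morphism with inverse the projection $\pi$, $\Psi$ and $T$ are morphisms, and the two identities hold on all of $X$ and all of $Y$; so $R$ is biregular.

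The step needing the most care is the computation with $w_{j}$: one must keep straight that $\Theta$ composes in the reversed order, so that it is the left translation $\delta\mapsto\beta\delta$ — not $\delta\mapsto\delta\beta$ — that makes $w_{j}$ invariant; the rest is bookkeeping. The other point worth flagging is conceptual rather than technical: mere bijectivity of $\Psi|_{\Phi(X)}$ is genuinely not enough, which is precisely why the separating polynomial $h$, and hence the explicit inverse $T$, must be introduced.
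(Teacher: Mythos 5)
Your argument for the second assertion is correct and in fact more explicit than the paper's: the paper proves that $\Psi|_{\Phi(X)}$ is biregular by an abstract comparison of coordinate rings (it identifies ${\mathcal C}[\Phi(X)]$ with ${\mathcal C}[\widehat{W}]^{\Theta(\Gamma)}$, $\widehat{W}=\cup_{\sigma}\Theta(\sigma)(\Phi(X))$, via restriction to the disjoint components, and then identifies the latter with ${\mathcal C}[Y]$ via pullback by $\Psi$), whereas you produce a concrete inverse by means of the separating polynomial $h$ and the invariants $w_{j}=\sum_{\delta}y_{j,\delta}\,(h\circ\Theta(\delta))$ written as polynomials in $E_{1},\ldots,E_{N}$. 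The bookkeeping with the reversed composition law of $\Theta$ is right, and your conclusion that $R=\Psi\circ\Phi$ is birational, and biregular when the $f_{\sigma}$ are polynomial, is proved.

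However, you have not proved the first assertion of the lemma, which is that $\Psi:\Phi(X)\to Y$ is a biregular isomorphism with no hypothesis on the $f_{\sigma}$. Your map $T=(q_{1},\ldots,q_{n})$ only recovers the $e$-block of a point of $\Phi(X)$, i.e.\ it inverts the composite $R$, not $\Psi|_{\Phi(X)}$; and, as you yourself stress, bijectivity of $\Psi|_{\Phi(X)}$ alone does not give biregularity. This omission is not cosmetic: the paper needs the unconditional biregularity of $\Psi|_{\Phi(X)}$, combined with the earlier remark that $\Phi:X\to\Phi(X)$ is biregular whenever the $f_{\sigma}$ are biregular, to obtain the biregular case of Theorem \ref{teoweil}(1), which assumes the $f_{\sigma}$ biregular, not polynomial. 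Fortunately your own construction closes the gap with no new ideas: for every coordinate $y_{j,\sigma}$ of the ambient product set
$$v_{j,\sigma}=\sum_{\delta\in\Gamma} y_{j,\delta\sigma}\,\bigl(h\circ\Theta(\delta)\bigr),$$
so that $v_{j,\sigma}\circ\Theta(\beta)=\sum_{\delta}y_{j,\beta\delta\sigma}\,(h\circ\Theta(\beta\delta))=v_{j,\sigma}$ by the reindexing $\delta\mapsto\beta\delta$, while on $\Phi(X)$ only the $\delta=e$ term survives and $v_{j,\sigma}|_{\Phi(X)}=y_{j,\sigma}$. Writing $v_{j,\sigma}=q_{j,\sigma}(E_{1},\ldots,E_{N})$ gives a regular map $S=(q_{j,\sigma})_{j,\sigma}:{\mathcal C}^{N}\to\prod_{\sigma\in\Gamma}{\mathcal C}^{n}$ with $S\circ\Psi=\mathrm{id}$ on $\Phi(X)$; since $\Psi:\Phi(X)\to Y$ is onto, $S(Y)\subset\Phi(X)$ and $\Psi\circ S=\mathrm{id}_{Y}$, so $S|_{Y}$ is a regular inverse and $\Psi|_{\Phi(X)}$ is biregular, as the lemma claims. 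With this addition your proof is complete and gives a genuinely different, more constructive route than the paper's coordinate-ring argument (whose surjectivity step for $\chi$ is, incidentally, argued rather briefly there); a minor point you should still justify is that the ideals you use are comaximal, which requires the Nullstellensatz over the algebraically closed field ${\mathcal C}$, and that $h$ can be chosen over ${\mathcal L}$ if you want $S$ and $T$ defined over ${\mathcal L}$.
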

\begin{proof}
Set $\Phi(X)=W$. Since for each $\tau \in \Gamma-\{e\}$ one has that $\Theta(\tau)(W) \cap W=\emptyset$ (see Lemma \ref{disjuntos}), the polynomial map $\Psi:W \to Y$ is bijective. The set $$\hat{W}:=\cup_{\sigma \in \Gamma} \Theta(\sigma)(W)$$is a reducible affine variety whose (pairwise disjoint) irreducible components are $\Theta(\sigma)(W)$, for $\sigma \in \Gamma$. Since these irreducible components are pairwise disjoint, we may see that the algebra of regular maps on $\hat{W}$, say ${\mathcal C}[\hat{W}]$, is the product of the algebras of regular maps of the components, that is,
$${\mathcal C}[\hat{W}]=\Pi_{\sigma \in \Gamma} {\mathcal C}[\Theta(\sigma)(W)].$$

The above isomorphism is given by the restriction of each regular map of $\hat{W}$ to each of its irreducible components. Note that there is natural isomorphism $$\rho:{\mathcal C}[W] \to {\mathcal C}[\hat{W}]^{\Theta(\Gamma)},$$where ${\mathcal C}[\hat{W}]^{\Theta(\Gamma)}$ denotes the sub-algebra of $\Theta(\Gamma)$-invariant regular maps on $\hat{W}$. This isomorphism is given as follows. If $p \in {\mathcal C}[W]$ then for each $\sigma \in \Gamma$ we may consider the regular map $\rho_{\sigma}(p)=p \circ \Theta(\sigma)^{-1} \in {\mathcal C}[\Theta(\sigma)(W)]$. Then $\rho(p):=( \rho_{\sigma}(p) )_{\sigma \in \Gamma}$ turns out to be an injective homomorphism. It is clear that every $\Theta(\Gamma)-$invariant regular map of $\hat{W}$ is obtained in that way (so $\rho$ is surjective). 

On the other hand, ${\mathcal C}[Y]$ is isomorphic to ${\mathcal C}[\hat{W}]^{\Theta(\Gamma)}$. To see this, one may consider the injective homomorphism $\chi:{\mathcal C}[Y] \to {\mathcal C}[\hat{W}]^{\Theta(\Gamma)}$ defined by $\chi(p)=p \circ \Psi$. Now, to see that $\chi$ is onto we only need to note that $\rho^{-1}(\chi({\mathcal C}[Y]))$ is a sub-algebra of ${\mathcal C}[W]$, that $W$ is irreducible and that $Y$ has the same dimension as $W$.

In this way, $\chi^{-1} \circ \rho$ produces an isomorphism between ${\mathcal C}[W]$ and ${\mathcal C}[Y]$, that is, 
$\Psi:\Phi(X) \to Y$ is a biregular isomorphism and, in particular, $R=\Psi \circ \Phi:X \to Y$ is a birational isomorphism. 

As $\Psi:W \to Y$ is biregular isomorphism and $\Phi:X \to W$ is biregular if each $f_{\sigma}$ is polynomial, then $R:X \to Y$ turns out to be biregular provided that each $f_{\sigma}$ is polynomial. 
\end{proof}

\begin{lemm}
$Y$ is defined over ${\mathcal K}$.
\end{lemm}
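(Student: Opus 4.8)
The plan is to verify the hypothesis of Lemma \ref{inv}, namely that $Y^{\sigma}=Y$ for every $\sigma\in\Gamma$, and then to invoke that lemma, which in addition yields explicit defining equations for $Y$ over ${\mathcal K}$ via the trace map.

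First I would fix $\sigma\in\Gamma$ and choose $\eta\in{\rm Gal}({\mathcal C}/{\mathcal K})$ with $\rho(\eta)=\sigma$, so that $Y^{\sigma}=\widehat{\eta}(Y)$ and $\Phi(X)^{\sigma}=\widehat{\eta}(\Phi(X))$ (recall $\Phi(X)$ is defined over ${\mathcal L}$). Next I would rewrite property (1) of Lemma \ref{cubriente}, $\Psi^{\sigma}=\Psi$, in the equivalent pointwise form $\widehat{\eta}\circ\Psi=\Psi\circ\widehat{\eta}$; this is exactly the content of $\Psi^{\sigma}=\widehat{\eta}\circ\Psi\circ\widehat{\eta}^{\,-1}$ together with the fact, noted just before Lemma \ref{cubriente}, that the generators $E_{j}$ have coefficients in ${\mathcal K}$. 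Applying this to $\Phi(X)$ gives
$$
Y^{\sigma}=\widehat{\eta}\bigl(\Psi(\Phi(X))\bigr)=\Psi\bigl(\widehat{\eta}(\Phi(X))\bigr)=\Psi\bigl(\Phi(X)^{\sigma}\bigr).
$$

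Then I would use the identification $\Phi(X)^{\sigma}=\Theta(\sigma)(\Phi(X))$ already recorded in Diagram \eqref{diagrama1}, followed by property (2) of Lemma \ref{cubriente}, $\Psi\circ\Theta(\sigma)=\Psi$, to conclude
$$
\Psi\bigl(\Phi(X)^{\sigma}\bigr)=\Psi\bigl(\Theta(\sigma)(\Phi(X))\bigr)=\Psi(\Phi(X))=Y,
$$
so that $Y^{\sigma}=Y$. Since $\sigma\in\Gamma$ was arbitrary, Lemma \ref{inv} applies and shows that $Y$ is defined over ${\mathcal K}$; moreover, if $Y$ is cut out over ${\mathcal L}$ by polynomials $Q_{1},\dots,Q_{t}\in{\mathcal L}[y_{1},\dots,y_{N}]$ (which can be computed from $R$ and the $P_{i}$, e.g.\ with MAGMA), then the polynomials ${\rm Tr}(e_{j}Q_{i})\in{\mathcal K}[y_{1},\dots,y_{N}]$ also define $Y$.

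I do not expect a genuine obstacle here. The only point needing care is the bookkeeping between the two ways of "applying $\sigma$": acting on the coefficients of the $E_{j}$ (which is trivial because $E_{j}\in{\mathcal K}[(y_{\sigma})_{\sigma\in\Gamma}]$) versus acting on points of $\prod_{\sigma\in\Gamma}{\mathcal C}^{n}$ via $\widehat{\eta}$, and making sure the commutation relation $(*)$ between $\Theta$ and $\widehat{\eta}$ is applied consistently so that the chain of equalities above is legitimate.
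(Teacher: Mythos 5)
Your proposal is correct and follows essentially the same route as the paper's proof: use property (1) of Lemma \ref{cubriente} to commute $\Psi$ with $\widehat{\eta}$, identify $\widehat{\eta}(\Phi(X))$ with $\Theta(\sigma)(\Phi(X))$ via Diagram \eqref{diagrama1}, apply property (2) to get $Y^{\sigma}=Y$, and conclude with Lemma \ref{inv}. Your write-up is in fact slightly more explicit than the paper's (which leaves the use of property (2) and the trace-based equations implicit), but the argument is the same.
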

\begin{proof}
Let $\tau \in \Gamma$ and let $\eta \in {\rm Gal}({\mathcal C}/{\mathcal K})$ be so that $\rho(\eta)=\tau$. It follows from part (a) of Lemma \ref{cubriente} that 
$$\Psi \circ \hat{\eta}= \hat{\eta} \circ \Psi.$$

Now one sees that the bijection $\hat{\eta}:\Pi_{\sigma \in \Gamma} {\mathcal C}^{n} \to \Pi_{\sigma \in \Gamma} {\mathcal C}^{n}$ descends to the bijection $\hat{\eta}:{{\mathcal C}}^{N} \to {{\mathcal C}}^{N}$. Then it follows from diagram \eqref{diagrama0} that $\hat{\eta}(Y)=Y$, 
that is, $Y^{\tau}=Y$. As this holds for every $\tau \in \Gamma$, it follows from Lemma \ref{inv} that $Y$ is defined over ${\mathcal K}$.
\end{proof}

\begin{rema}
\mbox{}
Above we have constructed an explicit birational isomorphism $R:X \to Y$. We should be able to construct explicit equations for $Y$, say given by polynomials $Q_{1},\ldots, Q_{m} \in {\mathcal L}[t_{1}, \ldots, t_{N}]$ (again, this can be done by using MAGMA).  As we already know that $Y$ is defined over ${\mathcal K}$, so $Y$ is also defined by some polynomials over ${\mathcal K}$. To obtain these polynomials over ${\mathcal K}$, we proceed to replace each of the polynomials $Q_{j}$, which is not already defined over ${\mathcal K}$, by the set of polynomials  ${\rm Tr}(e_{1}Q_{j}),\ldots, {\rm Tr}(e_{m}Q_{j}) \in {\mathcal K}[x_{1},\ldots,x_{n}]$, where $\{e_{1},\ldots,e_{m}\}$ is a basis of ${\mathcal L}$ as ${\mathcal K}-$vector space (see the proof of Lemma \ref{polinv}).
\end{rema}

%%%%%%%%%%%%%%%
%\begin{rema}[Adapting the above to Theorem \ref{teoweil2}]\label{adaptacion}
%In order to see how to adapt the previous constructive proof to obtain a proof of Theorem \ref{teoweil2}, we proceed as follows. We assume $Z \subset {\mathcal C}^{m}$. We consider $$\widetilde{\Phi}: Z  \to \widetilde{Z}:= \widetilde{\Phi}(Z) \subset \prod_{\sigma \in \Gamma} {\mathcal C}^{m} : z  \mapsto (z, \ldots, z).$$We also define $\widetilde{\phi}((y_{\sigma})_{\sigma \in \Gamma})=(\phi^{\sigma}(y_{\sigma}))_{\sigma \in \Gamma}$, where $\Phi(x)=(y_{\sigma})_{\sigma \in \Gamma}$ is as defined in the previous section. If $\Psi:\Phi(X) \to Y$ is as defined in the previous section, then set $N:Y \to \widetilde{Z}$ by the rule $N \circ \Psi=\widetilde{\phi}$. One may sees that $N^{\sigma}=N$, for every $\sigma \in \Gamma$. This follows from the equalities
%\begin{enumerate}
%\item $\Psi^{\sigma}=\Psi$, for every $\sigma \in \Gamma.$
%\item $\Psi \circ \Theta(\sigma)=\Psi$, for every $\sigma \in \Gamma.$
%\item $\tau \circ \widetilde{\phi}=\widetilde{\phi}$, for every permutation $\tau$ of the coordinates.
%\end{enumerate}
%
%Next, if we set $\pi:\prod_{\sigma \in \Gamma} {\mathcal C}^{m} \to {\mathcal C}^{m}$ defined as $\pi((y_{\sigma})_{\sigma \in \Gamma})=y_{e}$, then $L=\pi \circ N:Y \to Z$ is the desired morphism.
%\end{rema}

\section{Applications to complex algebraic varieties and field of moduli}
Weil's Galois descent theorem has also been used in the study of complex algebraic varieties and their fields of moduli; in particular, in the case of Belyi curves and dessin d'enfants (see, for example \cite{Wolfart}).

Let us denote by ${\rm Gal}({\mathbb C})$ the group of field automorphisms of the field of complex numbers and let
$X$ be a complex algebraic variety with a finite group ${\rm Aut}(X)$ of automorphisms. Let $\Gamma_X$ be the subgroup of ${\rm Gal}({\mathbb C})$ consisting of those elements $\sigma$ with the property that $X^{\sigma}$ and $X$ are $\mathbb{C}$-isomorphic. The {\it field of moduli} of $X$, denoted by $\mbox{M}(X)$, is the fixed field of $\Gamma_{X}$. In general, the determination of whether the field of moduli is a field of definition is a difficult task, even in the case of algebraic curves; see, for example \cite{Arte, Earle, Hidalgo, yo1, Huggins, yo2, Shimura, Wolfart}.

\s

%As $\mathbb C$ is algebraically closed of characteristic zero, it holds that ${\rm Fix}({\rm Gal}({\mathbb C}/\mbox{M}(X)))=\mbox{M}(X)$, where ${\rm Gal}({\mathbb C}/\mbox{M}(X))$ denotes the group of field automorphisms of ${\mathbb C}$ acting as the identity over $\mbox{M}(X)$. 

A {\it Galois descent datum} for $X$ is a collection of isomorphisms $f_{\sigma}:X \to X^{\sigma}$ defined over ${\mathbb C}$ such that for every pair $\eta, \tau \in \Gamma_X$ it holds that $f_{\eta \tau}=f_{\tau}^{\eta} \circ f_{\eta}$. We shall suppose that $X$ admits a Galois descent datum and that 
$X$ is defined over a finite Galois extension ${\mathcal L}$ of $\mbox{M}(X)$ (we remark that the second assumption is vacuous in the case of complex algebraic curves; see \cite{HH} and \cite{Koizumi}). Let $\bar{\mathcal L}$ be the algebraic closure of ${\mathcal L}$ in ${\mathbb C}$ (which is also  algebraic closure of $\mbox{M}(X)$). As $\sigma \in \Gamma_{X}$ acts as the identity on $\mbox{M}(X)$, it defines an automorphism of $\bar{\mathcal L}$.  Galois theory implies that $\sigma \in \Gamma_{X}$ acts a field automorphism of $\mathcal L$. Now, as ${\rm Aut}(X)$ is finite, it can be seen that every automorphism of $X$ is defined over $\bar{\mathcal L}$. If $\tau \in {\rm Aut}({\mathbb C}/\bar{\mathcal L})$ then $X^{\tau}=X$ and, in particular, $\tau \in \Gamma_{X}$ and $f_{\tau} \in {\rm Aut}(X)$. 

Let $\sigma \in \Gamma_{X}$. If $\tau \in {\rm Aut}({\mathbb C}/\overline{\mathcal L})$ then $f_{\sigma}^{\tau}:X \to X^{\sigma}$. 
It follows that there is some $h_{\tau} \in {\rm Aut}(X)$ so that $f_{\sigma}^{\tau}=f_{\sigma} \circ h_{\tau}$.
This ensures that $f_{\sigma}$ is defined over $\bar{\mathcal L}$. 

Let us consider the map$$\Theta:{\rm Aut}({\mathbb C}/\bar{\mathcal L}) \to {\rm Aut}(X) \,\, \mbox{ given by } \,\, \sigma \mapsto f_{\sigma}^{-1}.$$

Since, for each $\sigma \in \Gamma_{X}$ the isomorphism $f_{\sigma}$ is defined over $\bar{\mathcal L}$, the condition $f_{\eta \tau}=f_{\tau}^{\eta} \circ f_{\eta}$ ensures that the above is a homomorphism of groups. The kernel of $\Theta$ is a finite index subgroup of ${\rm Aut}({\mathbb C}/\bar{\mathcal L})$ and its fixed field is then a finite extension of $\bar{\mathcal L}$; so equals to $\bar{\mathcal L}$. This implies that the kernel is the whole group ${\rm Aut}({\mathbb C}/\bar{\mathcal L})$. In particular, if $\tau, \sigma \in \Gamma_{X}$ have the same restriction to $\bar{\mathcal L}$ then $f_{\tau}=f_{\sigma}$.

Now, since ${\mathcal L}$ is a finite Galois extension of $\mbox{M}(X)$, we may see that there are only a finite number of algebraic varieties of the form $X^{\tau}$ which are isomorphic to $X$, where $ \tau \in {\rm Gal}({\mathbb C})$. In addition, we have a finite number of possible isomorphisms $f_{\sigma}$ for $\sigma \in \Gamma_{X}$. Thereby, we might assume that $X$ and all these isomorphisms are defined over ${\mathcal L}$.  In this way,  Weil's Galois  descent theorem asserts the following. 

\begin{coro}\label{teoweil0}
Let $X$ be a complex algebraic variety. Assume that ${\rm Aut}(X)$ is finite and that $X$ is defined over a finite extension of its field of moduli $\mbox{M}(X)$. Then
\begin{enumerate}
\item[(a)] $X$ admits a Galois descent datum, say $\{f_{\sigma}\}_{\sigma \in \Gamma_X}$, if and only if  there is  an algebraic variety $Y$ defined over $\mbox{M}(X)$ and a birational isomorphism $R:X \to Y$ defined over the algebraic closure of $\mbox{M}(X)$, such that $R=R^{\sigma} \circ f_{\sigma}$ for every $\sigma \in \Gamma_X$. If, moreover, all the isomorphisms $f_{\sigma}$ are biregular, then $R$ can be chosen to be biregular. 

\item[(b)] If there is another isomorphism $\hat{R}:X \to \hat{Y}$, where $\hat{Y}$ is defined over $\mbox{M}(X)$ and such that $\hat{R}=\hat{R}^{\sigma} \circ f_{\sigma}$, for every $\sigma \in \Gamma_X$ then there exists an isomorphism $J:Y \to \hat{Y}$ defined over $\mbox{M}(X)$ so that $\hat{R}=J \circ R$.

\end{enumerate}
\end{coro}
%%%%%%%%%%%%%%%%%
\section{An Explicit Example: A curve of genus $5$}\label{Sec:Example}
A closed Riemann surface $S$ of genus $5$ admitting a group $H \cong {\mathbb Z}_{2}^{4}$ of conformal automorphisms is called a classical Humbert's curve.  We may identify $S/H$ with the Riemann sphere together with five cone points, which, up to conjugation by a M\"obius transformation, can be supposed to be $\{\infty, 0, 1, \lambda_{1}, \lambda_{2}\}$. In \cite{CGHR} it was proved that $S$ can be represented by the following irreducible and non-singular complex projective curve
\begin{equation*}
\left \{ \begin{array}{lllllll}
\,\,\,\,\,\,u_{1}^{2} & + & u_{2}^{2} & + & u_{3}^{2} & =  & 0\\
\lambda_{1} u_{1}^{2}  & + & u_{2}^{2} & + & u_{4}^{2} & =  & 0\\
\lambda_{2} u_{1}^{2}  & + & u_{2}^{2} & + & u_{5}^{2} & =  & 0\\
\end{array}
\right\}
\subset {\mathbb P}_{\mathbb C}^{4}.
\end{equation*}

We may consider an affine model by taking $u_{1}=1$. For our example we consider $\lambda_{1}=-1$ and $\lambda_{2}=i$. If we set $u_{2}=x_{1}$, $u_{3}=x_{2}$, $u_{4}=x_{3}$ and $u_{5}=x_{4}$, then the affine model (defined over ${\mathbb Q}(i)$) is given by 
\begin{equation*}
X :  \left \{ \begin{array}{lllllll}
\,\,\,\,\,1 & + & x_1^2 & + & x_2^2 & =  & 0\\
 -1  & + & x_1^2 & + & x_3^2 & =  & 0\\
\,\,\, i  & + & x_1^2 & + & x_4^2 & =  & 0\\
\end{array}
\right\}
\subset {\mathbb C}^{4}.
\end{equation*}

Notice that the involution
\begin{equation*}
L:{\mathbb C}^{4} \to {\mathbb C}^{4} \,\, \mbox{ given by }\,\, 
(x_1,x_2,x_3,x_4) \mapsto (-i \;\overline{x_{1}},-i \; \overline{x_{3}},-i \; \overline{x_{2}},-i \;\overline{x_{4}})
\end{equation*}
keeps invariant $X$, so it defines 
a symmetry of $X$. It follows from Weil's Galois descent theorem that $X$ is in fact definable over ${\mathcal K}={\mathbb Q}={\mathbb Q}(i) \cap {\mathbb R}$. Let us consider $\mbox{Gal}(\mathbb{Q}(i)/\mathbb{Q})=\{\sigma_1(z)=z, \sigma_2(z)=\overline{z}\}$ and set 
\begin{equation*}
f_{\sigma_2}:X \to \overline{X}=X^{\sigma_{2}} \,\, \mbox{ given by } \,\,  
(x_1,x_2,x_3,x_4) \mapsto (i x_1,i x_3,i x_2,i x_4).
 \end{equation*}

In this example, 
\begin{equation*}
\Phi : X \subset  \mathbb
{C}^4 \to \Phi (X) \subset  \mathbb{C}^{8}: (x_1,x_2,x_3,x_4) \mapsto (x_1,x_2,x_3,x_4,i x_1,i x_3,i x_2,i x_4).
\end{equation*}
Therefore, the equations defining $\Phi(X)$  are given by
\begin{equation*}
\left\{ \begin{array}{cccccc}
 z_{1}-ix_{1}=0, &  z_{2}-ix_{3}=0 & z_{3}-ix_{2}=0, &  z_{4}-ix_{4}=0\\
 1+x_{1}^2+x_{2}^2=0,& -1+x_{1}^2+x_{3}^2=0,& i+x_{1}^2+x_{4}^2=0 
\end{array}\right\}.
\end{equation*}

We have that  $R:X \to Y=R(X): (x_{1},x_{2},x_{3},x_{4}) \mapsto (t_{1},\ldots,t_{14})$, where
\begin{equation*}\begin{array}{c}
t_{1}=(1+i)x_{1},\;
t_{2}=x_{2}+i x_{3},\;
t_{3}=x_{3}+i x_{2}, \;
t_{4}=(1+i)x_{4},\;
t_{5}=i x_{1}^{2}, \\
t_{6}=x_{2}+ix_{3}, \;
t_{7}=i x_{2}x_{3},\;
t_{8}=i x_{4}^{2}, \;
t_{9}=x_{1}x_{2}-x_{1}x_{3}, \;
t_{10}=x_{1}x_{3}-x_{1}x_{2},\\
t_{11}=0,\;
t_{12}=0,\;
t_{13}=x_{2}x_{4}-x_{3}x_{4},\;
t_{14}=x_{3}x_{4}-x_{2}x_{4}.
\end{array}
\end{equation*}

The inverse map $R^{-1}:Y \to X$ is given as 
\begin{equation*}
(t_{1},\ldots,t_{14}) \mapsto (x_{1},x_{2},x_{3},x_{4})=\left( \frac{t_{1}}{1+i}, \frac{t_{2}-i t_{3}}{2}, \frac{t_{3}-i t_{2}}{2}, \frac{t_{4}}{1+i}\right).
\end{equation*}

Explicit equations for $Y$ are given by 
\begin{equation*} Y:=\left\{\begin{array}{c}
4+t_{2}^{2}-t_{3}^{2}=0,\\
t_{1}^{2}+t_{2}t_{3}=0,\\
t_{1}^{2}+t_{4}^{2}-2=0,\\
t_{14}=-t_{13}=t_{4}(t_{3}-t_{2})/2, \; t_{12}=t_{11}=0, \\
t_{10}=-t_{9}=-t_{1}(t_{2}-t_{3})/2,\; 
t_{8}=t_{4}^2/2,\\
t_{7}=(t_{2}^{2}+t_{3}^{2})/4,\;
t_{6}=t_{2},\;
t_{5} =t_{1}^2/2.
\end{array}
\right\}
\end{equation*}

The above, in particular, asserts that (by forgetting the coordinates $t_{j}$, for $j\geq 5$) that 
the algebraic curve
\begin{equation*}
\hat{Y}=\left\{\begin{array}{ccccc}
4+w_{2}^{2}-w_{3}^{2}=0\\
w_{1}^{2}+w_{2}w_{3}=0\\
 w_{1}^{2}+w_{4}^{2}-2=0
\end{array}\right\} \subset {\mathbb C}^{4}
\end{equation*}
is isomorphic to $X$ by the isomorphism
\begin{equation*}
\hat{R}:X \to \hat{Y} \, \, \mbox{ given by } \,\,(x_{1},x_{2},x_{3},x_{4}) \mapsto (t_{1},t_{2},t_{3},t_{4})=(w_{1},w_{2},w_{3},w_{4}).
\end{equation*}

\begin{rema}\label{rutina}
In this example, the MAGMA routine we can use is the following.
\begin{enumerate}
\item[$>$] $Q$:=Rationals(\;);
\item[$>$] $P<t>$:=PolynomialRing($Q$);
\item[$>$] $q:=t^2+1$;
\item[$>$] $K<r>$:=SplittingField($q$);
\item[$>$]  $A<x_{1},x_{2},x_{3},x_{4}>$:=AffineSpace($K$,$4$);
\item[$>$] $B<t_{1},\ldots,t_{14}>$:=AffineSpace($K$, $14$);
\item[$>$] $X$:=Scheme($A$,$[1+x_{1}^{2}+x_{2}^{2},-1+x_{1}^{2}+x_{3}^{2},i+x_{1}^{2}+x_{4}^{2} ]$);
\item[$>$] $R$:=map$<A->B|[x_{1}+z_{1},
\ldots,x_{3}*x_{4}+z_{3}*z_{4}]>$;
\item[$>$] Image($R$);
\item[$>$] $R(X)$;
\end{enumerate}

With the above routine, MAGMA provides 
equations for $Z$ over ${\mathbb Q}$:
\begin{equation*}
\left\{\begin{array}{ccccc}
t_{14}=-t_{13}=t_{4}(t_{3}-t_{2})/2, \; t_{12}=t_{11}=0\; 
t_{9}+t_{10}=0, \; 
t_{6} - t_{7}=0\;
t_{5} + t_{8} - 1=0,\\
t_{4}^2 - 2 t_{8}=0, \;
t_{3}^2 - 2 t_{7} - 2=0,\;
t_{2}^2 - 2 t_{7} + 2=0,\\
t_{8}^2 t_{10}^2 + t_{8}^2 - 2 t_{8} t_{10}^2 - 2 t_{8} - \frac{1}{4} t_{10}^4 + t_{10}^2 + 1=0,\\
t_{7} t_{10}^2 + t_{8} t_{10}^2 + 2 t_{8} - t_{10}^2 - 2=0,\\
t_{7} t_{8} - t_{7} - t_{8}^2 + 2 t_{8} + \frac{1}{2} t_{10}^2 - 1=0,\\
t_{7}^{2} - t_{8}^2 + 2 t_{8} - 2=0,\;
t_{2} t_{7} + t_{2} - t_{3} t_{8} + t_{3}=0,\\
t_{2} t_{3} - 2 t_{8} + 2=0,\;
t_{1} - 1/2 t_{2} t_{10} -  \frac{1}{2} t_{3} t_{10}=0,\\
t_{2} t_{10}^2 - 2 t_{3} t_{7} + 2 t_{3} t_{8} + t_{3} t_{10}^2=0,\\
t_{2} t_{8} - t_{2} - t_{3} t_{7} + t_{3}=0.
\end{array}\right\}.
\end{equation*}
\end{rema}


\begin{thebibliography}{99}

\bibitem{Arte}
{\sc  M. Artebani and S. Quispe,} {\em Fields of moduli and fields of definition of odd signature curves},  Arch. Math. (Basel) {\bf 99} (2012), no. 4, 333--344.


\bibitem{MAGMA}
{\sc J. J. Cannon, W. Bosma and C. Playoust,}
{\em The Magma algebra system. I. The user language}, J. Symbolic Comput. {\bf 24}, (3-4) (1997), 235--265.

\bibitem{CGHR}
{\sc A. Carocca, V. Gonz\'alez-Aguilera, R.A. Hidalgo and R. E. Rodr\'{\i}guez,}
{\em Generalized Humbert Curves.},  Israel Journal of Mathematics {\bf 64} (2008), no. 1, 165--192.


\bibitem{CLO}
{\sc D. Cox, J. Little and D. O'Shea,}
{\em An Introduction to Computational Algebraic Geometry and Commutative Algebra}, Springer, Third Edition, 2012. 



\bibitem{Earle}
{\sc C.J. Earle,}
{\em On the moduli of closed Riemann surfaces with symmetries,}
Advances in the Theory of Riemann Surfaces (1971), 119--130. Ed. L.V. Ahlfors et al.
(Princeton Univ. Press, Princeton).

\bibitem{Earle2}
{\sc C.J. Earle}, 
{\em Diffeomorphisms and automorphisms of compact hyperbolic 2-orbifolds,} 
Geometry of Riemann surfaces,  London Math. Soc. Lecture Note Ser. {\bf 368}, Cambridge Univ. Press, Cambridge, 2010.

\bibitem{Mac2}
{\sc D. R. Grayson and M. E. Stillman},
{\em Macaulay2, a software system for research in algebraic geometry}. 
Available at http://www.math.uiuc.edu/Macaulay2/


\bibitem{HH}
{\sc H. Hammer and F. Herrlich,}
{\em A Remark on the Moduli Field of a Curve}, 
Arch. Math. {\bf 81} (2003), 5--10.

\bibitem{Hidalgo}
{\sc R.A. Hidalgo,} 
{\em  Non-hyperelliptic Riemann surfaces with real field of moduli but not definable over the reals,} .
Archiv der Mathematik {\bf 93} (2009), 219--222. Erratum: Archiv. der Math. {\bf 98} (2012), 449--451.

\bibitem{yo1}
{\sc R. A. Hidalgo and S. Reyes-Carocca,}
{\em Fields of moduli of classical Humbert curves}, Q. J. Math. {\bf 63} (2012), no. 4, 919--930.

\bibitem{Hilbert}
{\sc D. Hilbert,} {\em \"Uber die Theorie der Algebraischen Formen,}
Math. Ann. {\bf 36} (1890), 473--534.

\bibitem{Huggins}
{\sc  B. Huggins,}
{\em Fields of moduli of hyperelliptic curves,} 
Math. Res. Lett. {\bf 14} (2007), 10001--10014.



\bibitem{Hungerford}
{\sc  T.W. Hungerford}, 
{\em  Algebra}, Reprint. (English)
Graduate Texts in Mathematics, Vol. {\bf 73}. New York, Heidelberg, Berlin: Springer-Verlag.


\bibitem{Koizumi}
{\sc  S. Koizumi,}
{\em  Fields of moduli for polarized Abelian varieties and for curves,} 
 Nagoya Math. J. {\bf 48} (1972), 37--55.

\bibitem{Ko}
{\sc   A. Kontogeorgis}, 
{\em    Field of moduli versus field of definition for cyclic covers of the projective line}, 
 Journal de Th\'eory des Nombres de Bordeaux {\bf 21} (2009), 679--693.

\bibitem{Noether1}
{\sc  E. Noether}, 
{\em  Der Endlichkeitssatz der Invarianten endlicher Gruppen}, 
Math. Ann. {\bf 77} (1916), 89--92.

\bibitem{Noether2}
{\sc  E. Noether}, 
{\em  Der Endlichkeitssatz der Invarianten endlicher linear Gruppen der Characteristik p,} Nachr. Akad. Wiss. G\"ottingen (1926), 28--35.


\bibitem{Mumford}
{\sc  D. Mumford,  J. Fogarty and F. Kirwan} 
{\em  Geometric Invariant Theory}.
Ergebnisse der Mathematik und ihrer Grenzgebiete (2)
[Results in Mathematics and Related Areas (2)], {\bf 34} (3rd ed.), Berlin, New York: Springer-Verlag, 1994.

\bibitem{Procesi}
{\sc C. Procesi}, 
{\em Lie Groups: An approach through invariants and representations}.
(Universitex) Springer-Verlag, 2006.

\bibitem{yo2}
{\sc S. Reyes-Carocca,}
{\em Field of moduli of generalized Fermat curves.} Q. J. Math. {\bf 63} (2012), no. 2, 467--475.

\bibitem{Shimura}
{\sc G. Shimura,}
{\em On the field of rationality for an abelian variety}, 
Nagoya Math. J. {\bf 45} (1972), 167--178.


\bibitem{Weil}
{\sc A. Weil,}
{\em The field of definition of a variety,}
Amer. J. Math. {\bf 78} (1956), 509--524.


 \bibitem{Wolfart}
{\sc J.  Wolfart}, 
{\em  $ABC$ for polynomials, dessins d'enfants and uniformization---a survey. Elementare und analytische Zahlentheorie}, 313--345, Schr. Wiss. Ges. Johann Wolfgang Goethe Univ. Frankfurt am Main, 20, Franz Steiner Verlag Stuttgart, Stuttgart, 2006.


\end{thebibliography}
\end{document}